\documentclass[a4paper]{article}
\usepackage[english]{babel}
\usepackage[utf8]{inputenc}
\usepackage{paralist,url,verbatim}
\usepackage{amscd}
\usepackage[centering,margin=2.5cm]{geometry}
\usepackage{setspace}
\setstretch{1.06}
\usepackage{fancyhdr}
\usepackage{mathrsfs}
\usepackage{microtype}
\usepackage{manfnt}
\usepackage{nicefrac}
\usepackage[e]{esvect}
\usepackage{color}
\usepackage[usenames,dvipsnames,svgnames,table]{xcolor}
\definecolor{purple}{HTML}{961C8C}
\usepackage{amsmath,amsthm,amssymb,amsfonts}
\usepackage{graphicx, graphics}
\usepackage[bookmarksopen=true]{hyperref}
\hypersetup{colorlinks=true, citecolor=blue, urlcolor=TealBlue}
\usepackage{epstopdf}
\usepackage[mathlines, pagewise]{lineno}
\usepackage{bbm}
\usepackage[font=small,labelfont=bf]{caption}
\usepackage{subcaption}

\pdfminorversion=5
\theoremstyle{plain}
\newtheorem{theorem}{\bf Theorem}[section]
\newtheorem{conjecture}[theorem]{Conjecture}

\newtheorem*{theorem*}{Theorem}
\newtheorem*{conjecture*}{Conjecture}
\newtheorem*{problem*}{Problem}
\newtheorem{cor}[theorem]{Corollary}
\newtheorem{lemma}[theorem]{Lemma}
\newcommand\Defn[1]{\emph{\color{RubineRed}#1}}
\newtheorem{prp}[theorem]{Proposition}

\theoremstyle{definition}
\newtheorem{rem}[theorem]{Remark}
\newtheorem{definition}[theorem]{Definition}

\DeclareMathAlphabet{\mathpzc}{OT1}{pzc}{m}{it}

\newcommand{\sd}{\operatorname{sd} }

\newcommand{\F}{\operatorname{F} }

\newcommand{\did}{\mathrm{d} }
\newcommand{\RN}{\mathrm{N} }
\newcommand{\TT}{\mathrm{T} }
\newcommand{\cl}{\operatorname{cl} }

\newcommand{\R}{\mathbb{R}}

\newcommand{\intx}{\operatorname{int} }

\newcommand{\diam}{\operatorname{diam} }

\newcommand{\Z}{\mathbb{Z}}
\newcommand{\pp}{\operatorname{p} }

\newcommand{\cm}[1]{}

\newcommand{\Lk}{\mathrm{Lk}}
\newcommand{\St}{\mathrm{St}}

\newcommand{\CAT}{\mathrm{CAT}}

\DeclareFontFamily{OT1}{pzc}{}
\DeclareFontShape{OT1}{pzc}{m}{it}{<-> s * [1.2] pzcmi7t}{}

\setcounter{tocdepth}{6}

\begin{document}

\title{The Hirsch conjecture holds for normal flag complexes}
\author{
Karim A. Adiprasito
\thanks{Supported by DFG within the research training group ``Methods for Discrete Structures'' (GRK1408) and by the Romanian NASR, project PN-II-ID-PCE-2011-3-0533.}\\
\small Institut des Hautes \'Etudes Scientifiques\\
\small Le Bois-Marie 35, Route de Chartres\\
\small 91440 Bures-sur-Yvette, France\\
\small \url{adiprasito@math.fu-berlin.de}
\and
\and
Bruno Benedetti \thanks{Supported by the Swedish Research Council, grant ``Triangulerade M{\aa}ngfalder, Knutteori i diskrete Morseteori'', by the KTH Math department, and by the DFG grant ``Discretization in Geometry and Dynamics''.} \\
\small Institut f\" ur Informatik, FU Berlin\\
\small Takustrasse, 9\\
\small 14195 Berlin, Germany\\
\small \url{bruno@zedat.fu-berlin.de}}

\date{\today}
\maketitle 
\begin{abstract}
Using an intuition from metric geometry, we prove that any flag normal simplicial complex satisfies the non-revisiting path conjecture. As a consequence, the diameter of its facet-ridge graph is smaller than the number of vertices minus the dimension, as in the Hirsch conjecture. This proves the Hirsch conjecture for all flag polytopes, and more generally, for all (connected) flag homology manifolds.
\end{abstract}

\section{Introduction}
A natural problem in linear programming is the question how many iteration steps of the simplex method are required in order to solve a linear optimization problem in $d$ variables and given by $n$ linear inequalities. In other words, given an arbitrary polyhedron of dimension $d$ and with $n$ facets, how far away can two vertices possibly be? The distance between vertices is here measured by counting the number of edges one has to walk along, in order to move from one vertex to the other. 

An elegant answer was proposed in the Sixties by Warren Hirsch in a letter to George Dantzig:

\begin{conjecture}[(Unbounded) Hirsch conjecture {\cite[Sec.\ 7.3,\ 7.4]{Dantzig}}]
Let $Q$ denote a $(d+1)$-dimensional polyhedron with $n$ facets. Then the diameter of the $1$-skeleton of $Q$ is $\le n-(d+1)$.
\end{conjecture}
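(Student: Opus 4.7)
The canonical approach is to reduce the Hirsch conjecture to the equivalent non-revisiting path conjecture of Klee and Walkup: namely, that between any two vertices $u,v$ of $Q$ there exists a path in the $1$-skeleton of $Q$ which, once it leaves a facet of $Q$, never revisits it. If such a path exists, the bound $n-(d+1)$ follows at once, since at each step the path abandons at most one new facet, and the $d+1$ facets containing $u$ can never be ``left'' in the first place.

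With this reformulation in hand, I would first reduce to the bounded, simple case by a standard perturbation of the defining inequalities, together with a truncation at infinity that handles the unbounded rays. Dualizing, $Q$ is replaced by a simplicial $d$-ball or sphere $\Delta$ on $n$ vertices, and one asks for a short non-revisiting path in the facet-ridge graph of $\Delta$. The natural induction is on the pair $(d,n)$: for facets $\sigma,\tau$ with a common vertex $v$, contract the path into the star of $v$ and apply induction to $\Lk(v)$; when no such $v$ exists, pick an auxiliary vertex $w$ on any chosen short path and splice a path through $\St(w)$ with a path in the antistar $\Delta\setminus w$, whose length is controlled by the inductive hypothesis.

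The obstacle — the reason this is famously a conjecture — is precisely controlling the splicing so that the concatenated path remains non-revisiting: the local moves that shorten a path can reintroduce facets that had been abandoned, and no purely combinatorial induction is known that forces the non-revisiting property to be preserved while keeping the total length at most $n-(d+1)$. Indeed, the statement as written is in fact false in full generality, by Santos' counterexample, so the sketch above cannot succeed without additional hypotheses on $Q$ — such as the flagness and normality conditions on the dual simplicial complex under which the present paper actually works.
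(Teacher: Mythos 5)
There is nothing to prove here: the statement you were given is the (unbounded) Hirsch \emph{conjecture}, quoted by the paper only as historical context, and the paper explicitly records that it is false --- the unbounded version was disproved by Klee and Walkup already in the Sixties, and the bounded version (for polytopes) much later by Santos. Your final paragraph correctly senses that the statement cannot be established, but you attribute the failure to Santos; Santos' counterexample concerns the \emph{bounded} Hirsch conjecture, whereas the statement at hand, about general polyhedra, fails for a much smaller and older example (a $4$-dimensional unbounded polyhedron with $8$ facets and diameter $5$). The paper's actual contribution is the positive result for flag normal complexes, which is logically independent of the conjecture as stated.

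Beyond the misattribution, two steps of your sketch would not survive scrutiny even as a reduction strategy. First, the proposed passage from the unbounded to the bounded case by ``truncation at infinity'' is invalid: truncating an unbounded polyhedron introduces new facets, so the parameter $n$ changes and the Hirsch bound for the truncation does not transfer back; indeed the unbounded conjecture is strictly stronger than the bounded one, which is exactly why Klee--Walkup could refute it first. Second, the counting in your first paragraph is stated backwards: in the primal formulation one argues that along a non-revisiting path each step must \emph{enter} a facet not previously visited (the start vertex already lies on at least $d+1$ facets), not that each step ``abandons at most one new facet''; the paper gives the dual version of this count (each step sees a new vertex) in its introduction. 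Your inductive splicing scheme through $\St(w)$ and the antistar is precisely the kind of argument that is known to break --- it neither preserves the non-revisiting property nor controls length without extra hypotheses such as vertex-decomposability (Provan--Billera) or the flagness and normality used in this paper.
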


The case of unbounded polyhedra was quickly resolved when a counterexample was given by Klee and Walkup~\cite{KleeWalkup}. It remained to treat the case of bounded polyhedra (that is, polytopes), the \emph{bounded Hirsch conjecture}. We state the conjecture in a form dual to the classical formulation.

\begin{conjecture}[(Bounded) Hirsch conjecture {\cite{KleeWalkup}}]\label{cj:BHC}
The diameter of the facet-ridge graph of any $(d+1)$-polytope on $n$ vertices is $\le n-(d+1)$.
\end{conjecture}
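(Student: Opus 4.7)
The plan is to attack Conjecture~\ref{cj:BHC} via its classical reformulation, the \emph{non-revisiting path conjecture} (NRPC) on the simplicial side. Passing to the simplicial boundary of the dual polytope, facets of $P$ become vertices and the facet-ridge graph of $P$ becomes the dual graph of a simplicial $(d)$-sphere $\Delta$ on $n$ vertices. If in $\Delta$ every pair of facets is joined by a path that does not leave the star of a vertex once it has left it, then a pigeonhole count on the vertices visited yields the diameter bound $n-(d+1)$ immediately. So I would first reduce Conjecture~\ref{cj:BHC} to proving NRPC for the boundary simplicial complex of every simplicial polytope, and then focus all effort on NRPC.

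To prove NRPC, the plan is to import an intuition from metric geometry: equip $\Delta$ with a piecewise-spherical metric in which each simplex is realized as a unit spherical simplex, and look for paths shadowed by actual \emph{geodesics} of this metric. In a space of curvature bounded above by $1$ (a CAT($1$) space) with diameter less than $\pi$, geodesics are unique and behave rigidly enough that a geodesic joining the interiors of two facets intersects each open vertex-star in a connected arc --- which is exactly the non-revisiting property transported to the combinatorial level. Once this geodesic-to-path translation is set up, NRPC becomes a consequence of local CAT($1$) geometry, and the diameter bound follows.

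The main obstacle is the passage from the polytope to a space where the geodesic argument is legitimate. By Gromov's link criterion, an all-right piecewise-spherical simplicial complex is locally CAT($1$) if and only if every clique in its $1$-skeleton spans a simplex, i.e.\ the complex is \emph{flag}. Generic simplicial polytopes are not flag, and forcing a CAT($1$) structure on them by ad hoc edge-length changes destroys the link between geodesics and facet paths. This is not a cosmetic defect of the method: without some curvature-type hypothesis the NRPC route cannot close, which is consistent with the fact that the classical Hirsch bound is known to be sharp only for very restricted classes of polytopes. The realistic scope of this metric approach is therefore the subclass of polytopes whose dual boundary complex is flag (and satisfies a mild combinatorial condition — normality — ensuring that the geodesic stays in the ``interior of facets'' regime), and Conjecture~\ref{cj:BHC} will be established in that scope.

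The remaining steps are, by comparison, routine. I would verify that the NRPC-to-diameter passage survives the normality assumption (it is a direct double count on vertex stars), check that the geodesic exists and is unique in the class considered via standard Alexandrov geometry, and finally confirm that ``enters the star of $v$ at most once'' for the geodesic projects correctly to ``uses $v$ at most once'' for the resulting facet path. The genuine content of the proof lives entirely in the CAT($1$) step; everything else is bookkeeping.
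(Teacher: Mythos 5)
There is a genuine gap, and it is not one you can close: the statement you were asked to prove is the \emph{general} bounded Hirsch conjecture, for arbitrary $(d+1)$-polytopes, and this statement is false. The paper itself records this: Conjectures~\ref{cj:BHC} and~\ref{cj:BWv} were disproved by Santos, so the paper never proves the statement --- it only proves the Hirsch diameter bound for \emph{flag} normal complexes (Theorem~\ref{THM:HIRSCHA}), hence for flag polytopes and flag homology manifolds. Your own write-up concedes the decisive point: Gromov's link criterion makes the all-right piecewise-spherical metric $\CAT(1)$ exactly when the complex is flag, generic simplicial polytopes are not flag, and you then redefine the goal to ``Conjecture~\ref{cj:BHC} in that scope.'' That is a retreat to a strictly weaker theorem, not a proof of the stated conjecture; no amount of bookkeeping in the remaining steps can recover the general case, since a counterexample exists. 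Your closing remark that the classical bound is ``known to be sharp only for very restricted classes'' also understates the situation --- the bound is known to \emph{fail}.

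On the part you do carry out, your route coincides with the paper's geometric proof of Theorem~\ref{THM:HIRSCHA}: the reduction from the non-revisiting path property to the diameter bound by counting new vertices, Gromov's criterion giving $\CAT(1)$ for flag all-right complexes, convexity of vertex stars (Corollary~\ref{cor:hdct}), and shadowing a segment by the facets it crosses. Two technical cautions if you develop this into the flag-case theorem: you cannot assume geodesics are unique or that the space has diameter less than $\pi$ (the paper never needs uniqueness; it handles the $\did(x_i,\hat{x})\geq\pi$ case separately inside the proof of Lemma~\ref{lem:Hirsch}); and normality is not just a device to keep the geodesic ``in the interior of facets'' --- it is what allows the inductive passage to links when the segment crosses a face of codimension at least $2$, so the induction on dimension in Lemma~\ref{lem:Hirsch} genuinely requires it.
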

 
An equivalent conjecture, the \emph{$W_v$-conjecture}, or \emph{non-revisiting path conjecture}, was introduced in the Sixties by Klee and Wolfe, cf.~\cite{Klee}.

\begin{conjecture}[Non-revisiting path conjecture, or $W_v$-conjecture]\label{cj:BWv}
For any two facets of a simplicial polytope $R$ there exists a non-revisiting path connecting them.
\end{conjecture}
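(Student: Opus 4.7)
The plan is to attempt Conjecture~\ref{cj:BWv} for arbitrary simplicial polytopes by translating to the dual, then proceeding by a double induction, first on the dimension $d+1$ and, within each dimension, on the number of vertices $n$. In the dual picture, $R^\ast$ is a simple $(d+1)$-polytope, the facet-ridge graph of $R$ is the $1$-skeleton of $R^\ast$, and ``non-revisiting'' becomes the statement that some edge-path between two prescribed vertices of $R^\ast$ does not leave and later re-enter any facet of $R^\ast$. The base cases ($d+1 \le 3$, or $n \le 2(d+1)$) can be treated by classical arguments (Klee's theorem in dimension three, direct inspection for few vertices).

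For the inductive step I would try the following. Given facets $F, F'$ of $R$ sharing as few vertices as possible, fix a vertex $v \in F'$ and look at the antistar $R \setminus \St v$. If $R \setminus \St v$ is still a simplicial polytope (or, at worst, a shellable simplicial ball inheriting non-revisiting paths from $R$), then induction on $n$ applied to facets within that antistar produces a path avoiding $v$ throughout; concatenating with a short path inside $\St v$ that only enters $v$ at the very end gives a non-revisiting path from $F$ to $F'$. One then reduces the general case to the \emph{prime} situation, in the sense of Klee--Walkup, where no such vertex deletion reduces both $d$ and $n$ simultaneously; here the conjecture is known to be equivalent to the $(d+1)$-step conjecture, which could in principle be attacked by a careful analysis of the combinatorial types of prime simplicial $(d+1)$-polytopes with $2(d+1)$ vertices.

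The main obstacle, and the reason the metric argument for flag complexes does \emph{not} transfer, is the absence of a ``straightening'' mechanism in the non-flag setting. In a general simplicial polytope the link of a face may contain empty simplices, so the CAT$(1)$ / local nonpositive curvature toolkit which guarantees unique geodesics in the flag case is unavailable; combinatorially, this manifests as $3$-cycles in links that can force a putative non-revisiting path to backtrack into a previously abandoned facet in order to continue. Any successful proof must therefore supply a replacement for the geodesic/shortening argument: a combinatorial invariant that strictly decreases along carefully chosen pivot steps, while remaining oblivious to missing faces.

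I expect the hardest step to be precisely this last one, namely producing such a monovariant on simple polytopes that are not dual to flag complexes. A promising but speculative route would be to combine Bruggesser--Mani shellings of $R$ (to get a canonical facet ordering) with a perturbation argument that makes the polytope ``locally flag'' around the current facet of the path; if the perturbation can be kept consistent along the whole path, one might hope to mimic the CAT$(1)$ contraction step by step. Should this fail, the proposal would fall back on the Klee--Walkup reduction and attempt a direct combinatorial census of prime simplicial polytopes in low dimensions, a strategy which historically has been the one to deliver both positive results and eventual counterexamples in this circle of problems.
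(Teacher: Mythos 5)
There is a decisive obstruction you should be aware of before any of the technical steps: the statement you are trying to prove is \emph{false} in the generality in which it is posed, and the paper says so explicitly. Conjecture~\ref{cj:BWv} for arbitrary simplicial polytopes is equivalent (via Klee--Walkup and Klee--Kleinschmidt, as recalled in the introduction) to the bounded Hirsch conjecture, and both were disproved by Santos's counterexample, which the paper cites right after stating the conjecture. Accordingly, the paper contains no proof of this statement; what it proves is Theorem~\ref{THM:HIRSCHA}, where the non-revisiting path property is established only under the additional hypotheses that the complex is \emph{flag} and \emph{normal}. The flag hypothesis is not a convenience but the engine of both proofs: it is exactly what makes the right-angled metric $\CAT(1)$ (Gromov's criterion) so that vertex stars are convex and geodesics cannot revisit them, and in the combinatorial proof it is what makes the ``combinatorial segment'' construction non-revisiting (see the use of flagness in Lemma~\ref{lem:S}). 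So no refinement of your induction on $d$ and $n$ can succeed for general simplicial polytopes; at best you could hope to recover the theorem the paper actually proves, by restricting to flag (normal) complexes from the start.

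Even setting aside the falsity of the target, several of your intermediate steps have concrete gaps. The antistar $R\setminus \St v$ of a vertex in a simplicial $(d{+}1)$-polytope boundary is in general neither a polytope boundary nor a shellable ball, and it does not ``inherit'' non-revisiting paths: a path that is non-revisiting inside the antistar can revisit stars of vertices of $\Lk(v,R)$ once you concatenate it with a path through $\St(v,R)$, so the concatenation argument as stated does not produce a non-revisiting path in $R$. The Klee--Walkup reduction to the $d$-step/prime case is an equivalence between families of conjectures over all $(n,d)$, not a per-polytope reduction you can invoke inside an induction on a fixed $R$; and the ``monovariant oblivious to missing faces'' that you correctly identify as the crux is precisely what cannot exist in full generality, since Santos's polytope (which is not flag) witnesses a pair of facets with no non-revisiting path between them.
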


Here, a path of facets $\Gamma$ (represented by a map from an interval $I \in \mathbb{Z}$ to the facets of $\Sigma$) in a simplicial complex $\Sigma$ is \emph{non-revisiting} if the preimage of $\Gamma$, restricted to the star of any vertex of $\Sigma$, is an interval itself (cf.\ Section \ref{ssc:setup}). The reason why the $W_v$-conjecture implies the Hirsch conjecture is simple: Any non-revisiting path can be at most $n-(d+1)$ steps long. Here is why: At the beginning of the path we are in some $d$-face $X_0$, which has (at least) $d+1$ vertices. Next, we step into a new facet $X_1$, and we see a new vertex. From that moment on at each step we have to see a new vertex, otherwise the path would be revisiting. Since there are $n$ vertices in total, after $n-(d+1)$ steps we have seen all vertices already! 

Conversely, Klee and Kleinschmidt \cite{KleeKleinschmidt} showed that if there is a polytope $R$ which violates the $W_v$-conjecture, then from $R$ one can construct a (possibly different) polytope $P$ that violates the Hirsch conjecture.

Conjectures~\ref{cj:BHC} and~\ref{cj:BWv} have been disproved recently by Santos \cite{Santos}. So the bound $n-(d+1)$ for the diameter is not correct. Little do we know about how the correct bound should look like. At the moment, we do not know whether a \emph{linear} or even a \emph{polynomial} upper bound exist. Some of the best upper bounds known so far are the bounds 
$2^{d-1}n$ by Larman \cite{Larman} (compare also \cite{Barnette}), and the bound $n^{\log (d+1)+1}$ by Kalai \cite{Kalaidiameter, KaKl}. These bounds apply more generally to the class of \emph{normal} $d$-complexes, i.e., complexes where all links of faces of codimension $\ge 2$  are connected. (This is a common setting for the study of abstractions of the Hirsch conjecture, compare also Eisenbrand et al \cite{EHRR}.)

In this paper, we confirm the validity of the Hirsch conjecture for flag polytopes and more generally flag and normal complexes. All polytope boundaries, all spheres, all triangulated manifolds, and even all Cohen--Macaulay complexes are normal. 

\begin{theorem}\label{THM:HIRSCHA}
Let $C$ be any flag normal $d$-complex with $n$ vertices. Between any two facets of $C$ there is a non-revisiting path. Hence, the dual graph of $C$ has diameter $\le n-(d+1)$.
\end{theorem}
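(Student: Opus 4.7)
The plan is to turn the combinatorial non-revisiting condition into the geometric statement that stars of vertices are geodesically convex, and then use the fact that a geodesic in a $\CAT(1)$ space meets any convex subset in a connected arc. First I would endow $C$ with the \emph{all-right spherical metric}, in which each $k$-face is isometric to the spherical $k$-simplex of curvature $+1$ with all edges of length $\pi/2$. By Gromov's link criterion, this metric makes $C$ locally $\CAT(1)$ precisely because $C$ is flag: links of faces in an all-right complex inherit both the all-right metric and the flag property, and an induction on dimension delivers the curvature bound.

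In this metric the closed star $\overline{\St(v)}$ of a vertex $v$ coincides with the closed ball $\overline{B}(v,\pi/2)$, since in an all-right simplex the distance from a vertex to any point not in its open star equals $\pi/2$. The geometric core of the proof is then a \emph{convexity lemma}: $\overline{\St(v)}$ is convex along every $\CAT(1)$ geodesic of length $<\pi$. This is where flagness does its real work --- a geodesic that leaves $\overline{\St(v)}$ must exit through $\Lk(v)$, and the right-angled local geometry coming from the flag condition rules out re-entry. Normality enters in the complementary role of keeping $C\setminus\St(v)$ connected, so that ``inside'' and ``outside'' of $\overline{\St(v)}$ can be treated globally.

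Given facets $\sigma_0$ and $\sigma_k$, I would pick generic interior points $x_0\in\sigma_0$, $x_k\in\sigma_k$ and track a $\CAT(1)$ geodesic $\gamma$ between them. After a small perturbation, $\gamma$ crosses only codimension-one faces, transversally, and therefore traces out a walk $\sigma_0,\sigma_1,\ldots,\sigma_k$ of adjacent facets in the dual graph of $C$. Applying the convexity lemma to each vertex $v$, the set $\{t:\gamma(t)\in\overline{\St(v)}\}$ is an interval; hence so is $\{i:v\in\sigma_i\}$, which is precisely the non-revisiting property. The Hirsch bound $n-(d+1)$ then follows from the counting argument already recalled in the introduction.

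The hard part is the ``length $<\pi$'' caveat: in $\CAT(1)$, uniqueness of geodesics and convexity of closed $\pi/2$-balls hold only below the injectivity threshold $\pi$, and nothing a priori keeps the chosen facets that close. I expect the main work of the proof to be either (i) an inductive construction that builds the non-revisiting path facet-by-facet, each short step controlled by local convexity, or (ii) a concatenation of short geodesics across a carefully chosen sequence of intermediate facets, arranged so that the interval property for every vertex-star is preserved along the concatenation.
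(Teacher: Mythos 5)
Your setup coincides with the paper's geometric proof: endow $C$ with the all-right (right-angled) spherical metric, invoke Gromov's link criterion to get $\CAT(1)$ from flagness, and establish that vertex stars are geodesically convex (the paper's Corollary~\ref{cor:hdct}, proved via a local-to-global convexity principle for $\CAT(1)$ spaces). That much is correct and is exactly how the paper begins.

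The gap is in the passage from a geodesic to a facet path. You write that ``after a small perturbation, $\gamma$ crosses only codimension-one faces, transversally,'' but this is not available. In a $\CAT(1)$ space, segments between points at distance $<\pi$ are unique, so the geodesic itself cannot be perturbed, only its endpoints, and even that fails to save the argument for general normal complexes: if three or more facets share a codimension-$2$ face $\sigma$ (a branching locus), a segment may be forced through $\sigma$ for an open set of endpoint choices, and the facets immediately before and after need not be adjacent in the dual graph. This is precisely the ``technical difficulty hidden under the carpet'' the paper calls out in the introduction, and it is the whole reason normality appears as a hypothesis. The paper's Lemma~\ref{lem:Hirsch} resolves it with a nontrivial inductive construction: at each point $x_i$ of the geodesic, pass to the link of the minimal face $\sigma_i$ containing $x_i$ (a ``pearl''), recursively build a non-revisiting path there towards the tangent direction of the geodesic, and lift by joining with $\sigma_i$. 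Normality guarantees each $\Lk(\sigma_i,C)$ has connected dual graph so the recursion is well-posed --- it is not, as you suggest, about keeping $C\setminus\St(v)$ connected. You do acknowledge at the end that ``an inductive construction that builds the path facet-by-facet'' is probably what is needed, which is the right instinct, but that construction and the accompanying non-revisiting verification (the ``claim'' in Lemma~\ref{lem:Hirsch}, proved by a double induction on dimension) carry most of the weight of the theorem and are missing from your proposal.
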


We provide two proofs of Theorem \ref{THM:HIRSCHA}: a geometric proof (Section~\ref{sec:geomproof}), which follows from a result by Gromov on spaces of curvature bounded above, and a combinatorial proof (Section~\ref{sec:combproof}), which is more elementary, but also less intuitive.

Here is a sketch of and intuition for the geometric proof. Any simplicial complex can be turned into a metric length space by assigning the same length $\nicefrac{\pi}{2}$ to all edges of $C$, and interpreting all $k$-faces of $C$ to be equilateral simplices in the unit sphere $S^k\in\R^{k+1}$. Gromov \cite{GromovHG} revealed an interesting connection between geometric properties of this ``right-angled" metric and flag complexes:

\begin{quote}\em 
 Let $C$ be any \emph{flag} simplicial complex. When we endow $C$ with the right-angled metric, the star of every vertex of $C$ is geodesically convex.
 \end{quote}
 
\begin{figure}[htbf]
	\centering
  \includegraphics[width=.7\linewidth]{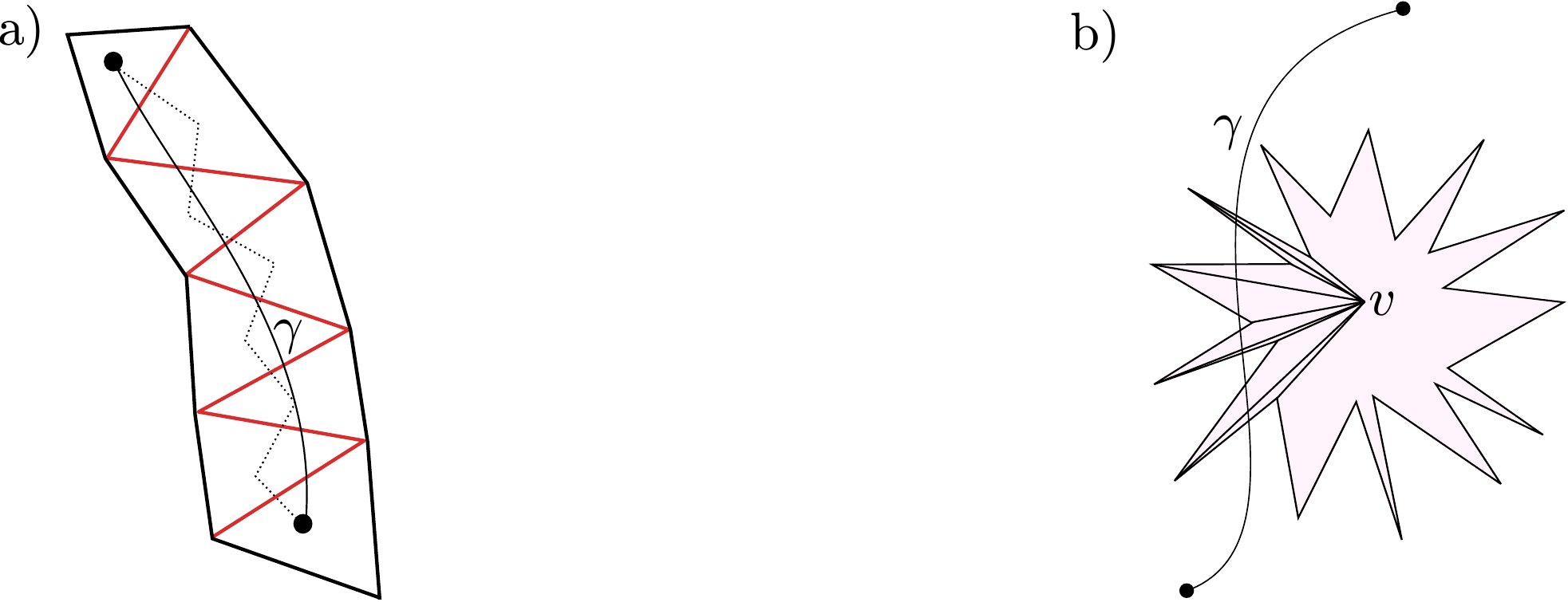}
 	\caption{a): If all vertex stars are convex, any segment $\gamma$ intersect any vertex star in a convex segment (and therefore does not reenter the star of a vertex it has previously left). By following the segment, we obtain our desired facet path.  b): A non-example. If some vertex star is not convex, some segment  $\gamma$  revisits it multiple times.}
	\label{fig:hirsch}
\end{figure}
 
 \noindent Say we have a flag normal complex and we want to find a non-revisiting path. Our idea is to endow it with the right-angled metric, and then `follow' the segments, that is, the shortest geodesics inside the metric space.  In fact, the intersection of any segment with an open convex set is obviously a segment (or the empty set). In particular, any segment intersects the interior of any vertex star in a connected set (possibly empty). In other words, no segment revisits a vertex star it has previously left. If we approximate a segment $\gamma$ with the dual path formed by the $d$-faces crossed by $\gamma$, the path we obtain is non-revisiting and we are done.

While the `flag' assumption is needed for the convexity of vertex stars, one might wonder whether the `normal' assumption is at all needed in the argument above. The truth is that we have hidden a minor technical difficulty under the carpet. Namely, a segment might go from a $d$-face $X$ to  a $d$-face $Y$ by passing through a face $\sigma$ of dimension $\le d-2$. Even if they share a vertex, $X$ and $Y$ are not (necessarily) adjacent in the dual graph; so if $C$ is not normal, it is not clear how to find a non-revisiting dual path from $X$ to $Y$ inside the star of $\sigma$. The natural way to ``bridge'' between $X$ and $Y$, is to consider the link of $\sigma$ and use induction. For this we need $C$ to be normal.

\subsection{Consequences}\label{sec:corH}

Theorem~\ref{THM:HIRSCHA} has several interesting consequences. Recall that a simplicial complex is a \Defn{triangulated manifold} if the union of its faces, as topological space, is homeomorphic to a manifold. 

\begin{cor}\label{cor:HirschB}
All flag triangulations of connected manifolds satisfy the non-revisiting path property, and in particular the Hirsch diameter bound.
\end{cor}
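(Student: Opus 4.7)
The plan is simple: Corollary~\ref{cor:HirschB} should follow immediately from Theorem~\ref{THM:HIRSCHA} once we verify that every flag triangulation of a connected manifold is \emph{normal} in the paper's sense. Since flagness is given in the hypothesis, the entire task reduces to showing that for any triangulation $C$ of a $d$-dimensional topological manifold $M$ (possibly with boundary), the link of every face $\sigma \in C$ of codimension $k \geq 2$ is connected. Connectedness of the dual graph itself is automatic from connectedness of the underlying space.

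To verify normality, I would invoke the standard local-homology argument for triangulated manifolds. Pick a point $p$ in the relative interior of $\sigma$; then the topological link of $p$ in $M$ is canonically identified with $\partial \sigma * \Lk(\sigma) \cong S^{\dim\sigma-1} * \Lk(\sigma)$. Because $M$ is a manifold, $\Lk(p,M)$ has the reduced homology of a $(d-1)$-sphere when $p$ is an interior point of $M$, and is acyclic when $p$ lies on $\partial M$. The K\"unneth-type formula for joins,
\[
\widetilde{H}_n(A * B) \;\cong\; \bigoplus_{i+j=n-1} \widetilde{H}_i(A) \otimes \widetilde{H}_j(B),
\]
then forces $\Lk(\sigma)$ to be either a homology $(k-1)$-sphere or acyclic. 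In either case, $H_0(\Lk(\sigma)) \cong \Z$ as soon as $k-1 \geq 1$, which gives the desired connectedness.

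With normality in hand, Theorem~\ref{THM:HIRSCHA} applies directly to $C$ and delivers both a non-revisiting facet path and the diameter bound $n-(d+1)$ on the dual graph, finishing the proof. I do not foresee any serious obstacle here; the only mild subtlety is that in non-PL triangulations (e.g.\ those arising from the Edwards--Cannon double suspension theorem) the links need only be homology spheres rather than honest topological spheres, but this is irrelevant for the $H_0$ computation used above.
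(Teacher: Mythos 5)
Your reduction to \emph{normality} of $C$ and the subsequent local-homology computation are on the right track, and indeed the paper leaves this corollary as an immediate consequence of Theorem~\ref{THM:HIRSCHA} plus the (unproved, assumed known) fact that triangulated manifolds are normal. However, there is a genuine gap between what you prove and what normality, as defined in the paper, actually requires. The paper's definition demands that $G^\ast(\St(\sigma,C))$ be \emph{connected as a graph} for every face $\sigma$ (equivalently, $G^\ast(\Lk(\sigma,C))$ is connected, i.e.\ any two facets of the link are joined by a chain of ridge-adjacent facets). Your argument establishes only that $\Lk(\sigma,C)$ is \emph{topologically} connected, which is strictly weaker: a pure complex can have connected underlying space but disconnected dual graph (two $d$-simplices sharing a single vertex is the basic example). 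The same objection applies to your dismissal of the $\sigma=\emptyset$ case — connectedness of $|C|$ alone does not make $G^\ast(C)$ connected.

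The missing step is a descending induction on dimension using the fact that a link in a homology manifold is again a (boundary) homology manifold: having established that every $\Lk(\sigma,C)$ with $\operatorname{codim}\sigma\ge 2$ is connected as a space, one shows inductively that $G^\ast(\Lk(\sigma,C))$ is connected for all such $\sigma$ (when every $\Lk(\tau)$ inside a connected pure complex is connected, two facets can be chained together along a vertex path, passing through stars of consecutive path vertices whose dual graphs are connected by the inductive hypothesis on smaller dimension). This lemma is standard folklore — it is exactly the equivalence between the paper's informal characterization ``links of codimension $\ge 2$ faces are connected'' and its formal definition via dual graphs of stars — but it is not automatic, and it is the step that turns your homology computation into actual normality. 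Once that bridge is supplied, the rest of your argument (K\"unneth for joins, handling of boundary faces and non-PL triangulations) is correct and the conclusion follows from Theorem~\ref{THM:HIRSCHA} exactly as you say.
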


Recall that a simplicial polytope is \Defn{flag} if its boundary complex is a flag complex. Corollary \ref{cor:HirschB} specializes to this class as follows:

\begin{cor}\label{cor:HirschC}
Every flag polytope satisfies the non-revisiting path property, and in particular the Hirsch diameter bound.
\end{cor}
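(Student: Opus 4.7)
The plan is to reduce the statement directly to Corollary \ref{cor:HirschB}. First I would note that, by the definition just recalled, a simplicial polytope $P$ is flag precisely when its boundary complex $\partial P$ is a flag simplicial complex. Moreover, the boundary of any simplicial $(d+1)$-polytope is a triangulated $d$-sphere, hence a flag triangulation of a connected manifold. So $\partial P$ falls squarely within the hypothesis of Corollary \ref{cor:HirschB}.

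Next, I would translate the conclusion back to $P$. The facets of $P$ as a polytope (its top-dimensional faces) coincide with the facets of $\partial P$ as a simplicial complex, and the facet-ridge graph of $P$ is by construction the dual graph of $\partial P$. So any non-revisiting dual path in $\partial P$ between two of its facets is a non-revisiting facet path in $P$, which is exactly the $W_v$-property for $P$. Moreover, if $P$ has $n$ vertices then $\partial P$ has $n$ vertices and dimension $d$, so the diameter bound $n-(d+1)$ delivered by Corollary \ref{cor:HirschB} is precisely the Hirsch bound.

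In short, Corollary \ref{cor:HirschC} is a verbal specialization of Corollary \ref{cor:HirschB}, obtained by substituting ``boundary of a flag simplicial polytope'' in place of ``flag triangulation of a connected manifold''. The main ``obstacle'' is essentially the absence of one: the only thinking required is to verify that the polytopal data for $P$ matches the simplicial-complex data for $\partial P$, and this is immediate from the definitions of ``flag polytope'', ``facet'', and ``facet-ridge graph''.
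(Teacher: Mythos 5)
Your argument is correct and matches the paper's own reasoning: the paper presents Corollary~\ref{cor:HirschC} explicitly as the specialization of Corollary~\ref{cor:HirschB} to boundaries of flag simplicial polytopes, which are flag triangulated spheres. Your proposal simply spells out this specialization, and the identification of the polytopal facet-ridge graph with the dual graph of $\partial P$ is indeed immediate from the definitions.
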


\begin{rem}\label{rem:prb}
By a result of Provan and Billera~\cite{PB}, every \Defn{vertex-decomposable} simplicial complex satisfies the Hirsch diameter bound. As a corollary, they obtain the following famous result:

\begin{theorem}[Provan {\&} Billera {\cite[Cor.\ 3.3.4.]{PB}}]
Let $C$ be any shellable simplicial $d$-complex. Then the derived subdivision $\operatorname{sd} C$ of $C$ satisfies the Hirsch diameter bound. In particular, if $C$ is the boundary complex of any polytope, then $\sd C$ satisfies the Hirsch diameter bound.
\end{theorem}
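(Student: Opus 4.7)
The plan is to deduce this theorem as a direct corollary of Theorem \ref{THM:HIRSCHA}, bypassing the vertex decomposability argument of Provan and Billera. The first observation is that the barycentric subdivision of \emph{any} simplicial complex is flag: the vertices of $\sd C$ are the non-empty faces of $C$, a set of such vertices spans a simplex of $\sd C$ precisely when the corresponding faces form a chain under inclusion, and being a chain is a condition that can be detected pairwise. Hence every clique in the $1$-skeleton of $\sd C$ spans a genuine simplex, which is exactly the flag property.

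Next, I would show that if $C$ is pure shellable of dimension $d$, then $\sd C$ is normal. Shellable pure complexes are Cohen--Macaulay, and the Cohen--Macaulay property depends only on the underlying topological space for pure complexes of a fixed dimension; since $|\sd C| = |C|$ and $\sd C$ is also pure of dimension $d$, $\sd C$ inherits Cohen--Macaulayness. Every Cohen--Macaulay complex is normal: the link of any face of codimension $\geq 2$ is itself Cohen--Macaulay of positive dimension, hence in particular connected.

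Combining the two, $\sd C$ is a flag normal $d$-complex on some $n'$ vertices, and Theorem \ref{THM:HIRSCHA} immediately bounds the diameter of its dual graph by $n' - (d+1)$, which is exactly the Hirsch bound for $\sd C$. The main obstacle I anticipate is making the normality step self-contained. A direct combinatorial check that $\sd C$ is Cohen--Macaulay proceeds by induction on dimension, using the link decomposition $\Lk_{\sd C}(v) \cong \sd(\partial\sigma) \ast \sd(\Lk_C \sigma)$ for $v$ the vertex of $\sd C$ corresponding to a face $\sigma \in C$, and the fact that joins preserve the Cohen--Macaulay property; once one is comfortable citing ``Cohen--Macaulayness is topological'' from the literature, the entire argument collapses to the two bullet points above.
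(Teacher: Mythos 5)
Your argument is correct, but it is worth noting that the paper does \emph{not} prove this statement: it is quoted in Remark~\ref{rem:prb} as prior art, attributed to Provan and Billera, whose original proof proceeds by showing that the barycentric subdivision of a shellable complex is \emph{vertex-decomposable} (a stronger combinatorial property) and then invoking their result that vertex-decomposability implies the Hirsch bound. What you are proposing is a genuinely different derivation that routes through Theorem~\ref{THM:HIRSCHA} instead. Your two ingredients are both sound: barycentric subdivisions are always flag (cliques in $\sd C$ are precisely the chains in the face poset, a pairwise-checkable condition, as the paper itself notes), and shellable implies Cohen--Macaulay implies normal, with Cohen--Macaulayness passing to $\sd C$ by the Munkres theorem that it is a topological invariant of $|C|$ over a fixed field. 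The paper lists ``all Cohen--Macaulay complexes are normal'' among the examples of normal complexes, so this step is consistent with the paper's own framework, though to make it fully self-contained you should note that Cohen--Macaulayness yields not merely topological connectedness of links but strong connectedness (connectedness of the dual graph), which is what normality actually requires; this is standard but slightly more than ``the link is connected.'' Interestingly, the paper draws exactly this kind of conclusion one corollary later, but for triangulated manifolds rather than shellable complexes, precisely because manifolds are where vertex-decomposability genuinely fails while the flag-plus-normal hypothesis still holds. Your proof shows that the paper's main theorem also subsumes the original shellable case, which the paper implies but does not spell out; what Provan--Billera's route buys instead is the stronger structural conclusion of vertex-decomposability itself.
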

The derived subdivision of an arbitrary triangulated manifold, however, is not vertex-decomposable in general. The reasons are two: There are topological obstructions (all vertex-decomposable manifolds are spheres or balls) as well as combinatorial obstructions (some spheres have non-vertex-decomposable derived subdivisions, cf.~\cite{HZ, BZ}). That said, the derived subdivision of any simplicial complex is flag. So, by Corollary~\ref{cor:HirschB}, we have the following:

\begin{cor}
The derived subdivision of any triangulation of any connected manifold satisfies the Hirsch diameter bound.
\end{cor}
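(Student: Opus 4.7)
The plan is to derive this corollary directly from Corollary~\ref{cor:HirschB} by verifying that barycentric subdivisions of triangulated manifolds fall inside the class of flag triangulations of connected manifolds. So the entire proof reduces to checking two elementary preservation properties of the operation $\sd$.

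First I would recall the standard description of $\sd C$: its vertices are the (nonempty) faces of $C$, and a collection of such vertices spans a simplex in $\sd C$ exactly when the corresponding faces form a chain under inclusion. From this combinatorial description, flagness is immediate: any pairwise-compatible set of vertices of $\sd C$ corresponds to a family of faces that are pairwise comparable under inclusion, and pairwise comparability of faces of a simplicial complex forces total comparability, i.e., a chain. Hence every minimal non-face of $\sd C$ has cardinality $2$, so $\sd C$ is flag. (This holds for any simplicial complex $C$, with no assumption on topology.)

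Next I would note that if $C$ is a triangulation of a connected manifold $M$, then $\sd C$ is again a triangulation of the same underlying space $|C|=M$, since barycentric subdivision is a geometric subdivision and in particular does not alter the underlying topological space. In particular $\sd C$ is a flag triangulation of a connected manifold.

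Finally, I would apply Corollary~\ref{cor:HirschB} to $\sd C$: it satisfies the non-revisiting path property, and hence its dual graph has diameter at most $N-(d+1)$, where $N$ is the number of vertices of $\sd C$ and $d$ is its dimension. There is no real obstacle here — the only thing worth being careful about is not conflating the vertex count $n$ of $C$ with the vertex count $N$ of $\sd C$ (the latter equals the total number of faces of $C$), but the statement of the corollary only asserts that the Hirsch bound holds for $\sd C$, not for $C$, so this is harmless.
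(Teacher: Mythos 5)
Your proof is correct and follows the same route as the paper: observe that $\sd C$ is always flag (via the standard chain-of-faces description), note that $\sd C$ triangulates the same connected manifold as $C$, and then invoke Corollary~\ref{cor:HirschB}. Your justification of flagness (pairwise comparability of faces forces a chain) is exactly the standard argument the paper leaves implicit.
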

 
\end{rem}

\subsection{Set-up}\label{ssc:setup}

Recall that an (abstract) simplicial complex is \Defn{pure} if all its inclusion-maximal faces (the \Defn{facets}) have the same dimension. If $C$ is an abstract simplicial complex on $n$ vertices, any subset of $\{1,\ldots, n\}$ not in $C$ is a \Defn{non-face}. 

\begin{definition}[Flag complexes]
A simplicial complex $C$ is \Defn{flag} if every inclusion minimal non-face is a
$2$-element set (that is, an edge).
\end{definition}

\begin{definition}[Diameter of (the dual graph of) a complex]
If $C$ is a pure simplicial $d$-complex on $n$ vertices, the \Defn{dual graph} or \Defn{facet-ridge graph} of $C$, denoted by $G^\ast(C)$, is constructed as follows. The set of vertices of $G^\ast(C)$ consists of the facets of $C$; we connect two vertices by an edge if the corresponding facets have a $(d-1)$-face in common. We define $\diam(C)$ as the diameter of the graph $G^\ast(C)$.  We say that $C$ satisfies the \Defn{Hirsch diameter bound} if $\diam(C)\le n-(d+1).$
\end{definition}

Recall that if $\sigma$ is a face of an abstract simplicial complex $C$, the \Defn{star} $\St(\sigma,C)$ of $\sigma$ in $C$ is the collection of faces $\tau$ of $C$ with the property that $\tau\cup \sigma\in C$; the \Defn{link} $\Lk(\sigma,C)$ of $\sigma$ in $C$ is the collection of faces $\tau$ of $C$ such that $\tau \cap \sigma=\varnothing$, but $\tau \cup \sigma \in C$. 
If $\sigma$, $\tau$ are two faces of a simplicial complex $C$ that lie in a common face, then $\sigma\ast\tau$, the \Defn{join} of $\sigma$ and $\tau$, denotes the minimal face of $C$ containing them both. With this, the link of a face $\sigma$ in $C$ is combinatorially isomorphic to the complex
\[\widetilde{\Lk}(\sigma,C):=\{\tau \in C:\sigma\ast \tau \in C, \sigma\cap \tau=\varnothing\}\]
the \Defn{combinatorial link} via the map
\begin{align*}
\widetilde{\Lk}(\sigma,C)\ &\ \longrightarrow\ \ {\Lk}(\sigma,C)\\
\tau\ &\  \longmapsto\ \ \Lk(\sigma,\sigma\ast \tau).
\end{align*}
We shall therefore identify elements of link and combinatorial link.

\begin{definition}[Normal complexes]
Let $C$ be a pure simplicial $d$-complex. A pure simplicial complex $C$ is \Defn{normal} if for every face $\sigma$ of $C$ (including the empty face), $G^\ast(\St(\sigma,C))$ is connected. \end{definition}

For the next definition, we use the notation $\F_k (C)$ to denote the set of faces of $C$ of dimension $k$ (or equivalently, of cardinality $k+1$). By an \Defn{interval} in $\mathbb{Z}$ 
we mean a set of the type $[a,b]:=\{x\in \mathbb{Z}: a\le x\le b\}$. 

\begin{definition}[Curves, facet paths and vertex paths]
If $X$ is a metric space and $I$ is an interval in~$\R$, an immersion $\gamma:I\mapsto X$ is a \Defn{curve}. If $C$ is a pure simplicial $d$-complex, and $I$ is an interval in~$\Z$, then a \Defn{facet path} is a map $\Gamma$ from $I$ to $\F_d(C)$ such that for every two consecutive elements $i$, $i+1$ of~$I$, we have that $\Gamma(i)\cap \Gamma(i+1)$ has dimension $d-1$. A \Defn{vertex path} in $C$ is a map $\gamma$ from $I$ to $\F_0(C)$ such that for every two consecutive elements $i$, $i+1$ of~$I$, the vertices $\gamma(i)$ and $\gamma(i+1)$ are joined by an edge. 
\end{definition}

All curves and paths are considered with their natural order from the startpoint (the image of $\min I$) to the endpoint (the image of $\max I$). For example, the \Defn{last facet} of a facet path $\Gamma$ in a subcomplex $S$ of $C$ is the image of the maximal $z\in I$ such that $\gamma(z)\in S$. As common in the literature, we will not strictly differentiate between a curve (or path) and its image; for instance, we will write $\gamma\subset S$ to denote the fact that the image of a curve $\gamma$ lies in a set $S$.

If $\gamma$ and $\delta$ are two curves in any metric space such that the endpoint of $\gamma$ coincides with the starting point of $\delta$, we use the notation $\gamma \cdot \delta$ to denote their \Defn{concatenation} or \Defn{product} (cf.~\cite[Sec.~2.1.1.]{BuragoBuragoIvanov}). Analogously, if the last facet of a facet path $\Gamma$ and the first facet of a facet path $\Delta$ coincide, we can \Defn{concatenate} $\Gamma$ and $\Delta$ to form a facet path $\Gamma\cdot \varDelta$. Concatenations of more than two paths are represented using the symbol~$\prod$. 

If $i$ and $j$ are elements in the domain of a facet path $\Gamma$, then $\Gamma_{[i,j]}$ is the restriction of $\Gamma$ to the interval $[i,j]$ in $\mathbb{Z}$. If a facet path $\Gamma$ is obtained from a facet path $E$ by restriction to some interval, then $\Gamma$ is a \Defn{subpath} of $E$, and we write $\Gamma \subset E$. Two facet paths coincide up to \Defn{reparametrization} if they coincide up to an order-preserving bijection of their respective domains.

\begin{definition}[$W_v$-property, cf.~\cite{Klee}]
Let $C$ be a pure simplicial complex. The facet path $\Gamma$ is \Defn{non-revisiting} if for every pair $i, j$ in the domain of $\Gamma$ such that $\Gamma(i)$ and $\Gamma(j)$ lie in $\St(v,C)$ for some vertex $v\in C$, the subpath $\Gamma_{[i,j]}$ of $\Gamma$ lies in $\St(v,C)$. Equivalently, $\Gamma$ is non-revisiting if for every vertex $v$ of $C$, the preimage $\Gamma^{-1}(\St(v,C))$ is an interval in $\mathbb{Z}$. We say that $C$ satisfies the \Defn{non-revisiting path property}, or \Defn{$W_v$-property}, if for every pair of facets of $C$, there exists a non-revisiting facet path connecting the two.
\end{definition}

\begin{lemma}[cf.~\cite{KleeKleinschmidt}]
Any pure simplicial complex that satisfies the $W_v$-property satisfies the Hirsch diameter bound.
\end{lemma}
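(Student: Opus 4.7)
The plan is to follow the argument sketched in the introduction and turn it into a rigorous counting argument. Fix a pure simplicial $d$-complex $C$ on $n$ vertices satisfying the $W_v$-property, and pick two facets $X,Y$ of $C$. I want to bound the dual-graph distance between them, so take a non-revisiting facet path $\varGamma: [0,\ell] \to \F_d(C)$ with $\varGamma(0)=X$ and $\varGamma(\ell)=Y$ of minimal length $\ell$; it suffices to show $\ell \le n-(d+1)$.

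The combinatorial invariant to track is the cumulative vertex set $V(i) := \bigcup_{j \le i} \varGamma(j) \subseteq \F_0(C)$ seen up to step $i$. Since $\varGamma(0)$ is a $d$-face, it has exactly $d+1$ vertices, so $|V(0)| = d+1$. The heart of the argument is then the following claim: for every $i \ge 1$ one has $|V(i)| \ge |V(i-1)|+1$, i.e., each step of the path exposes at least one vertex not seen before.

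To prove the claim, note that $\varGamma(i-1)$ and $\varGamma(i)$ share a common $(d-1)$-face by the definition of a facet path, so $\varGamma(i)$ contains exactly one vertex $v$ not in $\varGamma(i-1)$. Suppose for contradiction that $v \in V(i-1)$; then $v \in \varGamma(j)$ for some $j \le i-1$. Since also $v \in \varGamma(i)$, both $\varGamma(j)$ and $\varGamma(i)$ lie in $\St(v,C)$, so the $W_v$-property forces the whole subpath $\varGamma_{[j,i]}$ into $\St(v,C)$. In particular $\varGamma(i-1) \in \St(v,C)$, meaning $v \in \varGamma(i-1)$, contradicting the choice of $v$.

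Iterating the claim yields $|V(\ell)| \ge (d+1) + \ell$, and since $V(\ell) \subseteq \F_0(C)$ we get $(d+1)+\ell \le n$, i.e., $\ell \le n-(d+1)$. As $X,Y$ were arbitrary, $\diam(C) \le n-(d+1)$. There is no real obstacle here; the only thing worth being careful about is the use of the $W_v$-property with two (possibly non-consecutive) indices $j < i$, which is exactly how the non-revisiting hypothesis enters the counting.
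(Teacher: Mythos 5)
Your proof is correct and is essentially the same counting argument the paper sketches in the introduction (``at each step we have to see a new vertex, otherwise the path would be revisiting''); the paper itself cites Klee--Kleinschmidt for the lemma rather than spelling this out formally. The one detail worth noting is that your contradiction step implicitly uses the fact that for a facet $F$, $F\in\St(v,C)$ if and only if $v\in F$, which holds precisely because facets are maximal faces; this is fine, and your invocation of the non-revisiting property with non-consecutive indices $j<i$ is exactly the point where the $W_v$-hypothesis is needed.
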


Finally, if $M$ is a metric space with metric $\did: M\times M \mapsto \R$, then the \Defn{distance between two subsets} $A, B$ of $X$ is defined as $\did(A,B):=\inf \{\did(a,b): a\in A, b\in B\}.$ 
 
\section{The geometric proof}\label{sec:geomproof}

In this section, we give a geometric proof of Theorem~\ref{THM:HIRSCHA}. We need some modest background from the theory of spaces of curvature bounded above, which we review here. For a more detailed introduction, we refer the reader to the textbook by Burago--Burago--Ivanov~\cite{BuragoBuragoIvanov}.

\subsubsection*{$\CAT(1)$ spaces and convex subsets} 

A metric space $M$ with metric $\did: M\times M \mapsto \R$ is a \Defn{length space} if for every pair of points $a$ and $b$ in the same connected component of $M$, the value of $\did(a,b)$ is also the minimum of the lengths of all rectifiable curves from $a$ to~$b$.
A curve that attains the distance $\did(a,b)$ is denoted by $[a,b]$, and is a \Defn{segment} connecting $a$ and $b$. A \Defn{geodesic} $\gamma:I\mapsto M$ is a curve that is locally a segment, that is, every point in $I$ has an open neighborhood $J$ such that $\gamma$, restricted to $\cl(J)$, is a segment. A \Defn{geodesic triangle} $[a,b,c]$ in $M$ is given by three vertices $a,b,c$ connected by some three segments  $[a,b],\, [b,c]$ and $[a,c]$, each of length $<\pi$. 

A \Defn{comparison triangle} for a geodesic triangle $[a,b,c]$ in $M$ is a geodesic triangle $[\bar{a},\bar{b},\bar{c}]$ in $S^2$ such that $\did(\bar{a},\bar{b})=\did(a,b)$, $\did(\bar{a},\bar{c})=\did(a,c)$ and $\did(\bar{b},\bar{c})=\did(b,c)$. The space $M$ is a \Defn{$\CAT(1)$ space} if it is a length space in which the following condition is satisfied:

\smallskip
\noindent {\textsc{Triangle condition}:} \emph{For each geodesic triangle $[a,b,c]$ inside $M$ and for any point $d$ in the relative interior of $[a, b]$, one has $\did(c,d)\leq \did(\bar{c},\bar{d})$, where $[\bar{a},\bar{b},\bar{c}]$ is any comparison triangle for $[a,b,c]$ and $\bar{d}$ is the unique point on $[\bar{a},\bar{b}]$ with $\did(a,d) = \did(\bar{a},\bar{d})$.}
\smallskip

Let $A$ be any subset of a length space $M$. The set $A$ is \Defn{convex} if any two points of $A$ are connected by a segment that lies in $A$. The set $A$ is \Defn{locally convex} if every point in $A$ has an open neighborhood $U$ such that $U\cap A$ is convex. The following classical observation relates convexity and local convexity in $\CAT(1)$ spaces.

\begin{prp}[cf.~\cite{Tietze},~\cite{Nakajima}, {\cite[Thm.\ 8.3.3]{Papa}}, \cite{BuxWitzel}]\label{prp:hdct} Let $M$ denote a compact $\CAT(1)$ length space. Let $A$ be any locally convex subset of $M$ such that any two points in $A$ are connected by a rectifiable curve in $A$ of length $\le \pi$. Then $A$ is convex.
\end{prp}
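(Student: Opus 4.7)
The plan is to produce, for any two points $p, q \in A$, a segment of $M$ connecting them that lies in $A$. The strategy is to minimize length over rectifiable curves from $p$ to $q$ inside $A$, then show that the resulting minimizer is in fact a segment of $M$.

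First I would set up the variational problem: fix $p, q \in A$ and let $\ell$ be the infimum of lengths of rectifiable curves from $p$ to $q$ inside $A$. By hypothesis $\ell \le \pi$. I would then produce a length-minimizing curve $\gamma \subset A$ by compactness: take an arc-length parametrized minimizing sequence $\gamma_n$, whose curves are $1$-Lipschitz from a fixed interval into the compact space $M$. Arzelà--Ascoli yields a uniformly convergent subsequence, and lower semicontinuity of length under uniform convergence forces the limit $\gamma$ to realize $\ell$. A minor technicality is to ensure $\gamma \subset A$; this is automatic if $A$ is closed (which is the case in the intended application to vertex stars), and in general one can pass to $\overline{A}$ and verify that local convexity and short-path-connectedness carry over.

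Next I would show that $\gamma$ is a local geodesic in $M$. At any interior point $\gamma(t)=x$, local convexity furnishes an open neighborhood $U$ of $x$ in $M$ with $U\cap A$ convex. For $\delta$ small enough, the subarc $\gamma|_{[t-\delta,\, t+\delta]}$ is contained in $U\cap A$, and the segment of $M$ connecting $\gamma(t-\delta)$ and $\gamma(t+\delta)$ then also lies in $U\cap A$ by convexity. If this segment were strictly shorter than the subarc it replaces, splicing it into $\gamma$ would yield a strictly shorter curve from $p$ to $q$ in $A$, contradicting the minimality of $\gamma$. Hence $\gamma$ is locally a segment of $M$.

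Finally I would invoke the classical fact that in a $\CAT(1)$ space any local geodesic of total length $\le \pi$ is a (global) segment; this follows from the triangle condition via the standard no-branching argument of Alexandrov geometry. Together with the previous step, this exhibits $\gamma$ as a segment of $M$ from $p$ to $q$ lying entirely in $A$, whence $A$ is convex. I expect the main obstacle to lie precisely in that last step: the upgrade from locally to globally minimizing is the heart of the $\CAT(1)$ theory, and it depends crucially on the length bound $\ell \le \pi$ supplied by the hypothesis — without such a bound the statement genuinely fails, as witnessed by great circles on the round $2$-sphere, which are local but not global geodesics.
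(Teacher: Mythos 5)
The paper does not prove this proposition; it is cited (``cf.'') to Tietze, Nakajima, Papadopoulos, and Bux--Witzel, so there is no in-paper argument to compare against. Your skeleton is the standard one from those sources: produce a length-minimizing curve from $p$ to $q$ inside $A$, show by the splicing argument that it is a local geodesic of $M$, and invoke the $\CAT(1)$ local-to-global theorem for local geodesics. Those steps are sound, and the last one does hold for total length equal to $\pi$ and not merely $<\pi$: once $\gamma|_{[0,t]}$ is a segment for every $t<\pi$, continuity of $t\mapsto\did(\gamma(0),\gamma(t))$ extends this to $t=\pi$.

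The one genuine gap is the existence of the minimizer, which you flag but dispatch too quickly. Arzel\`a--Ascoli gives a limit curve in $\overline{A}$, and your fallback of passing to $\overline{A}$ does not work in general: local convexity is hypothesized only at points of $A$, so a new boundary point of $\overline{A}$ need not have a convex neighborhood. Indeed, the proposition as printed is false without a closedness hypothesis. Take $M=S^2$, let $D$ be a closed metric disk of radius $\nicefrac{\pi}{4}$ with center $p_0$, let $s$ be a closed geodesic segment from $p_0$ to a boundary point of $D$, and put $A=D\setminus s$: every point of $A$ lies at positive distance from the compact set $s$, so a small ball around it meets $A$ in a convex set; any two points of $A$ are joined inside $A$ by detouring through a tiny semicircle around $p_0$, of total length well under $\pi$; yet $A$ is not convex, since the unique short segment between two nearby points on opposite sides of $s$ crosses $s$. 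This is why the cited references assume $A$ closed (or complete in its intrinsic metric), a hypothesis the proposition omits but which does hold in the paper's only application (Corollary~\ref{cor:hdct}, where $A=\St(v,C)$ is a closed subcomplex). With ``closed'' added, your proof is complete.
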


\subsubsection*{Right-angled simplices and convex vertex-stars}
\newcommand\geo{{\mathrm{geo}}}
\newcommand\glue{\mathrm{gl}}

For us, a \Defn{geometric (spherical) simplex} of dimension $d$, or geometric \Defn{d-simplex}, is the convex hull of $d+1$ points in general position in $S^d$. A geometric simplex $\Delta$ is \Defn{right-angled} if all dihedral angles of $\Delta$ are equal to $\nicefrac{\pi}{2}$. Equivalently, $\Delta$ is right-angled if it is regular and of diameter $\nicefrac{\pi}{2}$. By convention, every $0$-simplex is \Defn{right-angled} as well. 

Naturally, if $C$ is a simplicial complex, we can assign to every face $\sigma$ in $C$ a right-angled geometric simplex $\sigma_\geo$, and subsequently glue the geometric simplices along faces using the combinatorial information given by $C$. Since right-angled simplices of the same dimension are isometric, we can choose the gluing maps to be isometries. We say the resulting object $C_\geo$ is an \Defn{intrinsic simplicial complex}, and the distance between two points $a$, $b$ in $C_\geo$ is given by the minimum over the length of all rectifiable curves connecting $a$ to $b$; this is the natural \Defn{intrinsic length metric} on~$C_\geo$. 

For a more detailed introduction to the intrinsic geometry of simplicial complexes, we refer the reader to \cite[\S 3.2]{BuragoBuragoIvanov}, \cite{Charney} and \cite[Sec.\ 2.1]{DM-NP}. For the rest of this section we consider every simplicial complex $C$ to be endowed with its intrinsic length metric $\did$.

If $C$ is any intrinsic simplicial complex, and $\sigma$ is any face of $C$, then the link $\Lk(\sigma, C)$ has a natural geometric structure itself: If $p$ is any interior point of $\sigma$, then $\RN^1_{(p,\sigma)} C$ is the subset of unit length elements of the tangent space $\TT_{(p,\sigma)} C$ that are orthogonal to $\sigma$. The space $\RN^1_{(p,\sigma)} C$ is naturally subdivided into (right-angled) simplices itself: if $\tau$ is any face of $C$ containing $\sigma$, then $\RN^1_{(p,\sigma)} \tau$, the subset of elements $\RN^1_{(p,\sigma)} C$ ``pointing towards'' $\tau$, is isometric to a simplex in some sphere $S^d$. The collection $\Lk_p(\sigma, C)$ of spherical simplices obtained this way is a intrinsic simplicial complex that is combinatorially equivalent to the link $\Lk(\sigma, C)$ of $C$ at $\sigma$; in this section, we shall identify the two. This is well defined: up to isometry, 
$\Lk_p(\sigma, C)$ does not depend on the choice of $p$. For details, see Charney \cite{Charney} or \cite[Sec.\ 2.2]{DM-NP}.

 With this notion, Proposition~\ref{prp:hdct} gives the following:

\begin{cor}\label{cor:hdct}
Let $C$ be a pure simplicial $d$-complex such that each face of $C$ is right-angled and $C$ is a $\CAT(1)$ metric space. Then $\St(v,C)$ is convex in $C$ for every vertex $v$ of $C$.
\end{cor}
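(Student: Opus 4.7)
My plan is to verify the two hypotheses of Proposition~\ref{prp:hdct} for the subset $A := \St(v, C)$ of the compact $\CAT(1)$ space $M := C$, so that convexity of $\St(v,C)$ follows immediately.

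\textbf{Short connecting paths.} Each point $p \in \St(v,C)$ lies in some closed face $\sigma_p$ containing $v$. Since $\sigma_p$ is right-angled its diameter equals $\nicefrac{\pi}{2}$, so there is a segment from $p$ to $v$ inside $\sigma_p$ of length $\le \nicefrac{\pi}{2}$. Concatenating $[p,v]$ with $[v,q]$ produces, for any $p,q \in \St(v,C)$, a rectifiable curve in $\St(v,C)$ of length $\le \pi$.

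\textbf{Local convexity.} I argue by induction on $d = \dim C$, the base case $d=0$ being trivial. For $d \ge 1$, pick $p \in \St(v,C)$ and let $\tau$ be the face of $C$ whose relative interior contains $p$. If $v \in \tau$, then a sufficiently small geodesic ball $U$ around $p$ (small balls in a $\CAT(1)$ space are convex) lies inside $\St(\tau,C) \subseteq \St(v,C)$, hence $U \cap \St(v,C) = U$ is itself convex. The delicate case is $v \notin \tau$ with $\tau \cup \{v\} \in C$, that is, $p \in \Lk(v,C)$. Using the local description of an intrinsic simplicial complex, a small neighborhood of $p$ in $C$ is isometric to a product $V \times K$, where $V$ is a neighborhood of $p$ inside $\tau_\geo$ and $K$ is a short cone on $\Lk_p(\tau,C)$; this splitting is honestly isometric precisely because all dihedral angles are $\nicefrac{\pi}{2}$. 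Under this identification, the portion of $\St(v,C)$ in the neighborhood corresponds to $V \times K'$, where $K'$ is the short cone on $\St(v,\Lk(\tau,C))$ inside $K$, since a face $\rho \supseteq \tau$ of $C$ lies in $\St(v,C)$ if and only if $\rho \setminus \tau$ lies in the star of $v$ in $\Lk(\tau,C)$. Now $\Lk(\tau,C)$ is again a $\CAT(1)$ complex of right-angled simplices, of dimension $d - \dim\tau - 1 < d$, so by induction $\St(v,\Lk(\tau,C))$ is convex; coning and multiplying by the convex set $V$ preserves convexity in the local model, and hence $\St(v,C)$ is locally convex at $p$.

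The main obstacle is precisely this boundary case of local convexity: one needs right-angledness in an essential way to obtain the metric product decomposition of a neighborhood of $p$, so that the piece of $\St(v,C)$ near $p$ can be identified with the cone on a vertex-star in the link and the induction on dimension can close. With both hypotheses of Proposition~\ref{prp:hdct} verified, applying it to the compact $\CAT(1)$ space $C$ immediately gives convexity of $\St(v,C)$.
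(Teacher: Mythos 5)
Your overall plan --- verify the two hypotheses of Proposition~\ref{prp:hdct} and then apply it --- is the same as the paper's, and your short-paths step (route through $v$, use that right-angled faces have diameter $\nicefrac{\pi}{2}$) is identical. For local convexity the paper reduces to \emph{vertex} links via the subcomplex identity $\Lk(w,\St(v,C))=\St(v,\Lk(w,C))$ for each vertex $w\neq v$ of $\St(v,C)$ and invokes the inductive hypothesis there; you instead check local convexity at an arbitrary point $p\in\rint\tau$ via $\Lk(\tau,C)$. Both are legitimate reductions feeding the same induction.

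The execution of your reduction, however, has a genuine gap. You assert that a small neighborhood of $p\in\rint\tau$ (with $v\notin\tau$, $\tau\cup\{v\}\in C$) is \emph{isometric} to a metric product $V\times K$, and you attribute this to right-angledness. That product decomposition is valid for Euclidean (piecewise-flat) polyhedral complexes, where the local model at $p$ is $\R^{\dim\tau}\times(\text{Euclidean cone on }\Lk(\tau,C))$. In the piecewise-\emph{spherical} all-right metric the local model is a small ball in the spherical cone over the space of directions at $p$, and that space of directions is the \emph{spherical join} of a sphere $S^{\dim\tau-1}$ (directions tangent to $\tau$) with $\Lk(\tau,C)$; spherical joins are not metric products (for a $2$-complex and $p$ an interior edge point, the neighborhood is a spherical disk, which is not isometric to a product of two arcs). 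Right-angledness gives you regular faces of diameter $\nicefrac{\pi}{2}$ and Gromov's flag criterion for $\CAT(1)$-ness --- it does not produce a metric splitting. The step can be repaired: local convexity at $p$ is controlled by the space of directions, where the relevant subset is the spherical join $S^{\dim\tau-1}\ast\St\bigl(v,\Lk(\tau,C)\bigr)$ sitting inside $S^{\dim\tau-1}\ast\Lk(\tau,C)$, and such a join is $\pi$-convex precisely when the second factor is, which your induction supplies. As written, though, the isometric-product claim is false and the local-convexity argument does not close.
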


\begin{proof}
The proof is by induction on $d$; the case $d=0$ is trivial. Assume now $d\ge 1$. For every vertex $w\in C$, the simplicial complex $\Lk (w,C)$ is a $\CAT(1)$ complex (cf.~\cite[Thm.\ 4.2.A]{GromovHG}) all whose faces are right-angled. Thus, $\St(v,C)$ is locally convex since for every $w\in \St(v,C), w\neq v$, we have that $\Lk(w,\St(v,C))=\St(v,\Lk(w,C))$ is convex in $\Lk (w,C)$ by inductive assumption. Furthermore, since every face of $C$ is right-angled, every point in $\St(v,C)$ can be connected to $v$ by a segment in $\St(v,C)$ of length $\le \nicefrac{\pi}{2}$. Application of Proposition~\ref{prp:hdct} finishes the proof.
\end{proof}

\subsection*{Geometric proof of Theorem~\ref{THM:HIRSCHA}.}

\begin{lemma} \label{lem:Hirsch}
Let $C$ be a normal simplicial $d$-complex such that each simplex of $C$ is right-angled and $C$ is a $\CAT(1)$ metric space. Let $X$ be any facet of $C$, and let $\mathcal{Y}$ be any finite set of points in $C$. Then, there exists a non-revisiting facet path $\Gamma$ from the facet $X$ of $C$ to some facet of $C$ containing a point of $\mathcal{Y}$.
\end{lemma}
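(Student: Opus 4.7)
The natural strategy, foreshadowed in the introduction, is induction on~$d$, with the segment from $X$ to the nearest target point as the geometric guide. The base case $d=0$ is trivial, since facets are single points. For the inductive step, pick $y\in\mathcal{Y}$ minimizing $\did(X,y)$, choose $x\in X$ realizing this infimum, and let $\gamma\colon[0,L]\to C$ be a segment from $x$ to $y$. After an arbitrarily small generic perturbation (of $y$, or of $\gamma$), the curve meets the relative interiors of only finitely many proper faces $\sigma_1,\ldots,\sigma_k$ and is otherwise interior to some facet. This produces a ``generic'' sequence $F_0=X,F_1,\ldots,F_k$ of facets visited by $\gamma$, where consecutive $F_{i-1},F_i$ share exactly the crossing face $\sigma_i$, and $F_k$ contains~$y$.

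I would now build the facet path $\varGamma$ from $X$ to $F_k$ one crossing at a time. If $\sigma_i$ is a ridge, $F_{i-1}$ and $F_i$ are already dual-graph adjacent, and a single edge does the job. If $\dim\sigma_i\le d-2$, the link $\Lk(\sigma_i,C)$ is again a normal, right-angled $\CAT(1)$ simplicial complex of strictly smaller dimension (the $\CAT(1)$ property of links is Gromov's link condition already used in Corollary~\ref{cor:hdct}, and normality passes to links because $\St(\rho,\Lk(\sigma_i,C))$ corresponds, via $\rho\mapsto\rho\ast\sigma_i$, to $\St(\rho\ast\sigma_i,C)$). I apply the inductive hypothesis inside $\Lk(\sigma_i,C)$, with starting facet $F_{i-1}\setminus\sigma_i$ and target set $\mathcal{Y}_{\text{link}}$ equal to the singleton of the outgoing direction of $\gamma$ at $\sigma_i$; genericity ensures this direction lies in the relative interior of the link-facet $F_i\setminus\sigma_i$, so the only link-facet containing it is $F_i\setminus\sigma_i$. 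The resulting non-revisiting link-path lifts to a facet path in $\St(\sigma_i,C)$ bridging $F_{i-1}$ to $F_i$. Concatenating all such bridges with the trivial ridge steps yields $\varGamma$.

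It remains to verify that $\varGamma$ is non-revisiting. Fix a vertex~$v$. By Corollary~\ref{cor:hdct}, the star $\St(v,C)$ is convex, so $\gamma^{-1}(\St(v,C))$ is a single interval of $[0,L]$, and consequently the generic facets $F_j$ lying in $\St(v,C)$ form a contiguous block. For each bridge at $\sigma_i$ one splits into three cases: if $v\in\sigma_i$ the whole bridge lies in $\St(v,C)$; if $\{v\}\cup\sigma_i\notin C$ no facet of the bridge does; and in the intermediate case $v\notin\sigma_i$ but $\{v\}\cup\sigma_i\in C$, the vertex $v$ descends to a vertex of $\Lk(\sigma_i,C)$, and the inductive non-revisiting property forces the bridge's $v$-containing facets to form an interval of the bridge. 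Gluing these local intervals to the global segment-interval yields a single interval of $\varGamma^{-1}(\St(v,C))$.

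\textbf{Main obstacle.} The step I expect to be genuinely delicate is this last gluing: one must confirm that the interval produced by the link-induction on each bridge is positioned coherently with the global convex interval $\gamma^{-1}(\St(v,C))$ of the ambient segment. The point is that the bridge endpoints $F_{i-1},F_i$ already sit in the global generic sequence, so their membership in $\St(v,C)$ is dictated by the global interval; the link-induction then forces the bridge's interior to behave consistently on either side. The only mildly annoying technicality is ruling out degenerate $\gamma$ that run along a low-dimensional face for positive length, which is what the generic perturbation is for.
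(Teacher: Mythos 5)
Your overall strategy — induction on $d$, using a segment as the geometric guide, bridging through links at the low-dimensional crossings, and invoking convexity of vertex stars to obtain the non-revisiting property — is indeed the spirit of the paper's argument. But there are two genuine gaps, and they are connected to the very places the paper's construction is more elaborate than yours.

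First, the ``generic perturbation'' is not available to you. You need $\gamma$ to be a segment from a closest point $x\in X$ to a point $y\in\mathcal{Y}$: perturbing $y$ moves the target off $\mathcal{Y}$, and perturbing $\gamma$ destroys the segment property, which you rely on crucially (the non-revisiting argument needs $\gamma$ to meet each convex star in at most one arc, and this is a property of \emph{segments}). In a piecewise-spherical $\CAT(1)$ complex, a segment may genuinely run along a face of codimension $\ge 2$ for positive length (e.g.\ along an edge of length $\nicefrac{\pi}{2}$), and when that happens the ``facet sequence $F_0,\dots,F_k$'' is not even well defined. The paper sidesteps this entirely: rather than fixing $\gamma$ up front, it constructs the segment and the facet path \emph{simultaneously}, one pearl $\sigma_i$ at a time, taking $\sigma_i$ to be the minimal face containing the current point $x_i$ in its relative interior and recursing into $\Lk(\sigma_i,C)$ with the set of tangent directions of \emph{all} segments towards $\mathcal{Y}_i$ as the target. (This is also why the lemma is stated for a finite set $\mathcal{Y}$ and not a single point: the inductive target is the finite set of tangent directions, and the construction iteratively narrows $\mathcal{Y}_i$ via equation $(\ast)$ so that the concatenated curve $\prod\overline{\gamma}_i$ is itself a segment.) This by-design robustness is not a cosmetic choice.

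Second, the assertion ``$\gamma^{-1}(\St(v,C))$ is a single interval'' does not follow from convexity alone. Convexity of $\St(v,C)$ gives \emph{some} segment between two of its points lying in the star; to conclude that $\gamma$ itself stays in the star you need uniqueness of segments, which in a $\CAT(1)$ space holds only below length $\pi$. Since $\St(v,C)$ has radius $\le\nicefrac{\pi}{2}$ about $v$, two points of the star can be at distance exactly $\pi$, realized both by a segment through $v$ and by a segment that leaves the star — and your fixed $\gamma$ may contain the latter as a subsegment, making $\gamma^{-1}(\St(v,C))$ disconnected. The paper addresses precisely this dichotomy inside the proof of the Claim: if $\did(x_i,\hat x)<\pi$ uniqueness applies; if $\did(x_i,\hat x)\ge\pi$ the construction would have been forced to route through $v$, yielding a contradiction with the choice of pearl. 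In your setup there is no such forcing, because $\gamma$ was fixed before $v$ was even named. This length-$\pi$ case is also what can create the ``floating islands'' your gluing step would have to rule out: a bridge whose endpoints $F_{i-1},F_i$ miss $v$ but whose interior meets $\St(v,C)$ because $v\ast\sigma_i\in C$, disconnected from other visits of $\varGamma$ to $\St(v,C)$. To repair the argument one would essentially have to re-introduce the paper's simultaneous construction of $\gamma$ and $\varGamma$, or at least an explicit choice rule for segments that rules out the length-$\pi$ detours around every relevant vertex $v$.
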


\begin{proof}
The proof, as well as the construction of the desired facet path, is by induction on the dimension $d$ of $C$. The case $d=0$ is easy: If $X$ consists of an element of $\mathcal{Y}$, the path is trivial of length $0$. If not, the desired facet path is given by $\Gamma:\{0,1\}\mapsto C$, with $\Gamma(0):=X$ and $\Gamma(1):=Y$, where $Y$ is any facet of $C$ consisting of an element of $\mathcal{Y}$. We proceed by induction on $d$, assuming that $d\ge 1$. 

\smallskip

\emph{Some preliminaries}: If $\alpha$ is any point in $C$, let us denote by $\sigma_\alpha$ the minimal face of $C$ containing $\alpha$. If $\omega$ is any second point in $C$, let $\operatorname{S}_{\alpha}^{\omega}$ denote the set of segments from $\alpha$ to $\omega$. For an element $\gamma\in\operatorname{S}_{\alpha}^{\omega}$ with $\TT_\alpha^1 \gamma\notin \TT_\alpha^1 \sigma_\alpha$, \Defn{the tangent direction of $\gamma$ in $\Lk_\alpha(\sigma_{\alpha},C)=\Lk(\sigma_{\alpha},C)$ at $\alpha$} is defined as the barycenter of $\Lk(\sigma_{\alpha},\tau)$, where $\tau$ is the minimal face of $C$ that contains $\sigma_\alpha$ and such that $\TT_\alpha^1 \gamma\in \TT_\alpha^1 \tau$.
Define $\operatorname{T}_{\alpha}^{\omega}$ to be the union of tangent directions in $\Lk (\sigma_{\alpha},C)$ at $\alpha$ over all segments $\gamma\in\operatorname{S}_{\alpha}^{\omega}$. Finally, set
\[\operatorname{S}_{\alpha}^{\mathcal{\varOmega}}:=\bigcup_{\omega\in \mathcal{\varOmega}} \operatorname{S}_{\alpha}^{\omega}\ \ \text{and}\ \ \operatorname{T}_{\alpha}^{\mathcal{\varOmega}}:=\bigcup_{\omega\in \mathcal{\varOmega}} \operatorname{T}_{\alpha}^{\omega}.\]
for any collection $\mathcal{\varOmega}$ of points in $C$. Clearly, $\operatorname{T}_{\alpha}^{\mathcal{\varOmega}}$ is finite.

\smallskip

Returning to the proof, let $x_0$ denote any point of $X_0:=X$ minimizing the distance to the set $\mathcal{Y}_0:=\mathcal{Y}$. Set $i:=0$. The construction process for the desired facet path goes as follows: 

\smallskip

\noindent {\bf Construction procedure.} If ${X_i}$ intersects $\mathcal{Y}$, set $\ell:=i$ and stop the procedure. If not, consider the face $\sigma_i:=\sigma_{x_i}$ of $C$ containing $x_i$ in its relative interior $\sigma_i\setminus \partial \sigma_i$. The simplicial complex $\Lk (\sigma_{i},C)$ is a normal $\CAT(1)$ complex (cf.~\cite[Thm.\ 4.2.A]{GromovHG}) all whose faces are right-angled. Now, we use the construction technique for dimension $d-\dim \sigma_i - 1\le d-1$ to find a (non-revisiting) facet path $\Gamma'_{X'_iX'_{i+1}}$ in $\Lk(\sigma_{i},C)$ from $X'_i:=\Lk(\sigma_{i},X_i)$ to some facet $X'_{i+1}$ of $\Lk(\sigma_{i},C)$ that intersects $\operatorname{T}_{x_i}^{\mathcal{Y}_i}$. We may assume that $\Gamma'_{X'_iX'_{i+1}}$ intersects  $\operatorname{T}_{x_i}^{\mathcal{Y}_i}$ only in the last facet $X'_{i+1}$. Lift the facet path $\Gamma'_{X'_iX'_{i+1}}$ in $\Lk(\sigma_{i},C)$ to a facet path $\Gamma_{X_iX_{i+1}}$ in $C$ from $X_i$ to $X_{i+1}:=\sigma_{i}\ast X'_{i+1}$ by join with $\sigma_i$, i.e.\ define \[\Gamma_{X_iX_{i+1}}:=\sigma_i\ast \Gamma'_{X_i'X_{i+1}'}.\] 
Let $\gamma_i$ be any element of $\operatorname{S}_{x_i}^{\mathcal{Y}_i}$ whose tangent direction in $\Lk(\sigma_{i},C)$ at $x_i$ lies in $X'_{i+1}$, let $\overline{\gamma}_i$ denote the restriction of $\gamma_i$ to $X_{i+1}$, and let $x_{i+1}$ be the last point of $\gamma_i$ in $X_{i+1}$. Finally, let $\mathcal{Y}_{i+1}$ denote the subset of points $y$ of $\mathcal{Y}_i$ with
\begin{equation}\tag{$\ast$} \label{eq:xx}
\did(y,x_i)=\did(y,x_{i+1})+\did(x_i,x_{i+1}).
\end{equation}
Now, increase $i$ by one, and repeat the construction procedure from the start.

\smallskip

Define the facet path \[\Gamma:=\prod_{i\in (0,\, \cdots,\ell-1)} \Gamma_{X_{i}X_{i+1}}.\] Associated to $\Gamma$, define the curve \[\gamma=\prod_{i\in(0,\, \cdots,\ell-1)} \overline{\gamma}_i \] from $x$ to some element $y$ of $\mathcal{Y}$, the \Defn{necklace} of $\Gamma$, and define the \Defn{pearls} of $\Gamma$ to be the faces $\sigma_i$. Finally, we denote by $\chi_i$, $0\le i\le \ell$, the element in the domain of $\Gamma$ corresponding to $X_i$; with this, the facet paths $\Gamma_{X_{i}X_{i+1}}$ coincide up to reparametrization with the subpaths $\Gamma_{[\chi_{i},\chi_{i+1}]}$ of $\Gamma$ for each $i$. 
For any element $a\neq \chi_\ell$ in the domain of $\Gamma$, let $i$ be chosen so that $a\in [\chi_{i},\chi_{i+1}-1]$. We say that $a$ is \Defn{associated to the pearl $\sigma_i$} of $\Gamma$. By convention, \Defn{$\chi_\ell$ is associated to the pearl~$\sigma_{\ell-1}$}.

By Equation \eqref{eq:xx}, $\gamma$ is a segment. Thus, by Corollary~\ref{cor:hdct}, if $v$ is any vertex of $C$, then $\gamma$ intersects $\intx \St(v,C)$ in a connected component. We will see that this fact extends to the combinatorial setting. First, we make the following claim.

\smallskip 
\noindent \emph{Let $a$ denote any element in the domain of $\Gamma$, and let $v$ be any vertex of $\Gamma(a)$. Let $\hat{x}$ denote the last point of $\gamma$ in $\St(v,C)$, and assume that $\hat{x}$ is not in $\Gamma(a)$. Let $\sigma_i$ be the pearl associated to $a$. Then $\Gamma_{[a,\chi_{i+1}]}$ lies in $\St(v,C)$. In particular, $X_{i+1}$ lies in $\St(v,C)$.}
\smallskip

\noindent To prove the claim, we need only apply an easy induction on the dimension:
\begin{compactitem}[$\circ$]
\item If $v$ is a vertex of the pearl $\sigma_i$, this follows directly from construction of $\Gamma$. Now, if $d=1$, then one of the vertices of $\Gamma(a)$, the last one encountered by $\gamma$, must be the pearl associated to $a$. But since $\hat{x}\notin \Gamma(a)$ comes after $v$ along $\gamma$ in $\St(v,C)$, we therefore have that $v$ must be the pearl associated to $a$, which in particular proves the case $d=1$. 
\item If $v$ is not in $\sigma_i$, we consider the facet path $\Gamma':=\Lk(\sigma_i,\Gamma_{[\chi_i,\chi_{i+1}]})$ in $\Lk(\sigma_i,C)$. The point $\hat{x}$ lies in $\St(v,C)$, which is convex in $C$ by Corollary \ref{cor:hdct}. Thus, the construction of $\gamma$ and $\Gamma$ implies that the restriction of $\gamma$ to the interval $[\gamma^{-1}(x_i),\gamma^{-1}(\hat{x})]$ lies in $\St(v,C)$: indeed, if $\did(x_i,\hat{x})<\pi$, then $x_i$ and $\hat{x}$ are connected by a unique segment in $C$, thus, this segment must lie in $\St(v,C)$. If $\did(x_i,\hat{x})\geq \pi$, then connecting $x_i$ to $v$ and $v$ to $\hat{x}$ by segments gives a segment from $x_i$ to $\hat{x}$; thus, $\Gamma(a)$ must contain $\sigma_{i+1}$ by construction of $\Gamma$, which contradicts the assumption that $a$ was associated to $\sigma_i$.

In particular, since $[\gamma^{-1}(x_i),\gamma^{-1}(\hat{x})]$ lies in $\St(v,C)$, the tangent direction of $\overline{\gamma}_i$ in $\Lk(\sigma_i,C)$ at $x_i$ is a point in $\St(v,\Lk(\sigma_i,C))$, where $v$. However, since $\Gamma(a)$ does not contain $\hat{x}$, this tangent direction does not lie in~$\Gamma'(a)$. Hence, the path $\Gamma'_{[a,\chi_{i+1}]}=\Lk(\sigma_i,\Gamma_{[a,\chi_{i+1}]})$ is contained in $\St(v,\Lk(\sigma_i,C))$ by induction assumption. We obtain \[\Gamma_{[a,\chi_{i+1}]}=\sigma_i\ast \Gamma'_{[a,\chi_{i+1}]}\subset\sigma\ast \St(v,\Lk(\sigma_i,C))\subset\St(v,C).\]
\end{compactitem}
We can now use induction on $d$ to conclude that $\Gamma$ is non-revisiting. The case $d=0$ is trivial, assume therefore $d\ge 1$. Let $v$ be any vertex of $C$, and let $a, b$ denote elements in the domain of $\Gamma$ with $\Gamma(a),\Gamma(b) \in \St(v,C)$ such that $a\le b$. We have to prove that the image of $\Gamma_{[a,b]}$ lies in $\St(v,C)$. Let $j$, $j\ge i$, be chosen such that $\sigma_j$ is the pearl associated with~$b$ and $\sigma_i$ is the pearl associated to $a$. There are two cases to consider
\begin{compactitem}[$\circ$]
\item {\bf If $i=j$:} By induction assumption, the facet path $\Gamma'_{X'_iX'_{i+1}}$ (as defined above) is non-revisiting. Thus, the facet path $\Gamma_{[\chi_i,\chi_{i+1}]}$, which coincides with $\Gamma_{X_iX_{i+1}}=\sigma_i\ast \Gamma'_{X'_iX'_{i+1}}$ up to reparametrization, is non-revisiting. Since $\Gamma_{[a,b]}$ is a subpath of $\Gamma_{[\chi_i,\chi_{i+1}]}$, this finishes the proof of this case.
\item {\bf If $i<j$:} The claim proves that $\Gamma_{[a,\chi_{i+1}]}$ lies in $\St(v,C)$ and that for every $k$, $i< k < j$, $\Gamma_{[\chi_k,\chi_{k+1}]}$ lies in $\St(v,C)$. Thus, \[\Gamma_{[a,\chi_{j}]}= \Gamma_{[a,\chi_{i+1}]} \cdot \Big(\prod_{k,\ i< k< j} \Gamma_{[\chi_k,\chi_{k+1}]}\Big)\subset \St(v,C).\]
Since $\Gamma_{[a,b]}=\Gamma_{[a,\chi_{j}]}\cdot\Gamma_{[\chi_{j},b]}$, it only remains to prove that  $\Gamma_{[\chi_{j},b]}\subset \St(v,C)$; this was proven in the previous case. \qedhere
\end{compactitem}
\end{proof}

\begin{cor} \label{cor:hirsch}
Let $C$ be a normal simplicial $d$-complex such that each simplex of $C$ is right-angled and $C$ is a $\CAT(1)$ metric space. Then $C$ satisfies the non-revisiting path property.  
\end{cor}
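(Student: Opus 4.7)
The plan is to read off Corollary~\ref{cor:hirsch} as an essentially immediate consequence of Lemma~\ref{lem:Hirsch}, since the lemma already constructs a non-revisiting facet path from any given starting facet to one whose closure meets a prescribed finite set of target points. The only thing to arrange is that the ``target facet'' the lemma produces is forced to be the one we actually want.

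Concretely, let $X$ and $Z$ be any two facets of $C$; we need to exhibit a non-revisiting facet path from $X$ to $Z$. Pick a point $y$ in the relative interior of $Z$, regarded as a point of $C$ under its intrinsic length metric. Because $y$ is an interior point of a top-dimensional face, the unique facet of $C$ that contains $y$ is $Z$ itself. Now apply Lemma~\ref{lem:Hirsch} with starting facet $X$ and finite point set $\mathcal{Y}:=\{y\}$. The lemma provides a non-revisiting facet path $\varGamma$ from $X$ to some facet of $C$ containing a point of $\mathcal{Y}$, and this target facet can only be $Z$. Since $X$ and $Z$ were arbitrary, this establishes the non-revisiting path property for $C$.

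I do not anticipate a genuine obstacle: all the real work -- the inductive construction of $\varGamma$, the use of the segment $\gamma$ as a ``necklace'' guiding the path, and the convexity of vertex stars (Corollary~\ref{cor:hdct}) forcing non-revisiting behaviour -- has already been carried out in Lemma~\ref{lem:Hirsch}. The one mild care-point is the choice of $y$ in the \emph{relative} interior of $Z$: had we allowed $y$ on the boundary of $Z$, several facets could contain $y$ and the lemma would only guarantee termination in one of them, not in $Z$ specifically. Picking an interior point removes this ambiguity and reduces the corollary to a single invocation of the lemma.
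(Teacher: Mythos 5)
Your proof is correct and follows exactly the same route as the paper: apply Lemma~\ref{lem:Hirsch} with $\mathcal{Y}=\{y\}$ for $y$ an interior point of the target facet, so that the facet produced by the lemma is forced to be the one required. The extra remark explaining why the interior choice of $y$ pins down the target facet uniquely is a helpful elaboration, but the argument itself is the paper's.
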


\begin{proof}
If $X$ and $Y$ are any two facets of $C$, apply Lemma 
\ref{lem:Hirsch} to the facet $X$ and the set $\mathcal{Y}=\{y\}$, where $y$ is any interior point of $Y$.
\end{proof}

We can give the first proof of Theorem~\ref{THM:HIRSCHA}.
\begin{proof}[\textbf{Proof of Theorem~\ref{THM:HIRSCHA}}]
We turn $C$ canonically into a length space,
by endowing  every face with the metric of a regular spherical simplex with dihedral angles $\nicefrac{\pi}{2}$. By Gromov's Criterion~\cite[Sec.\ 4.2.E]{GromovHG}, the resulting metric space is $\CAT(1)$, because $C$ is flag. By construction, every simplex of $C'$ is right-angled, so we can apply Corollary~\ref{cor:hirsch} and conclude that $C$ satisfies the non-revisiting path property.
\end{proof}

\section{The combinatorial proof}\label{sec:combproof}

In this section, we give a purely combinatorial proof of Theorem~\ref{THM:HIRSCHA}. The proof is articulated into two parts: First we construct a facet path between any pair of facets of $C$, the so called combinatorial segment, and then we prove that the constructed path satisfies the non-revisiting path property.

Let $\did(x,y)$ denote the (combinatorial) distance between two vertices $x,\, y$ in the $1$-skeleton of the simplicial complex $C$. If $\mathcal{Y}$ is a subset of $\F_0(C)$, let $\pp(x,\mathcal{Y})$ denote the elements of $\mathcal{Y}$ that realize the distance $\did(x,\mathcal{Y})$, and let ${\operatorname{T}}_x^{\mathcal{Y}}$ denote the set of vertices $y$ in $\Lk(x,C)$ with the property that \[\did(y,\mathcal{Y})+1=\did(x,\mathcal{Y}).\]

\subsection*{Construction of a combinatorial segment}\label{ssc:cseg}
\enlargethispage{2mm}
\noindent{\bf Part 1: From any facet $X$ to any vertex set $\mathcal{Y}$.}

\smallskip 
We construct a facet path from a facet $X$ of $C$ to a subset $\mathcal{Y}$ of $\F_0(C)$, i.e.\ a facet path from $X$ to a facet intersecting $\mathcal{Y}$, with the property that $\mathcal{Y}$ is intersected by the path $\Gamma$ only in the last facet of the path. 

If $C$ is $0$-dimensional, and $X$ consists of an element of $\mathcal{Y}$, the path is trivial of length $0$. Else, the desired facet path is given by $\Gamma:\{0,1\}\mapsto C$, where $\Gamma(0):=X$ and $\Gamma(1):=Y$, which is any facet consisting of an element of $\mathcal{Y}$ . 

If $C$ is of a dimension $d$ larger than~$0$, set $X_0:=X$, let $x_0$ be any vertex of $X_0$ that minimizes the distance $\did$ to $\mathcal{Y}$, set $\mathcal{Y}_0:=\pp({x_0},\mathcal{Y})$ and set $i:=0$. Then proceed as follows:

\smallskip

\noindent {\bf Construction procedure.} If $X_i$ intersects $\mathcal{Y}$, set $\ell:=i$ and stop the construction procedure. Otherwise, use the construction for dimension $d-1$ to construct a facet path $\Gamma'_{X'_iX'_{i+1}}$ in $\Lk(x_i,C)$ from the facet $X'_i:=\Lk(x_i,X_i)$ to the vertex set  $\operatorname{T}_{x_i}^{\mathcal{Y}_i}$. Denote the last facet of the path by $X'_{i+1}$. By forming the join of that path with $x_i$, we obtain a facet path $\Gamma_{X_iX_{i+1}}$ from $X_i$ to the facet $X_{i+1}:=x_i\ast X'_{i+1}$ of $C$. Denote the vertex of ${\operatorname{T}}_{x_i}^{\mathcal{Y}_i}$ it intersects by $x_{i+1}$. Define $\mathcal{Y}_{i+1}:=\pp(x_{i+1},\mathcal{Y}_{i})$. Finally, increase $i$ by one, and repeat from the start.
\smallskip

The concatenation of these facet paths is a \Defn{combinatorial segment} from $X$ to $\mathcal{Y}$.

\medskip

\noindent{\bf Part 2: From any facet $X$ to any other facet $Y$.}

\smallskip 

Using Part 1, construct a facet path from $X$ to the vertex set $\F_0(Y)$ of $X_{\ell+1}:=Y$. If $C$ is of dimension $0$, then this finishes the construction. If $C$ is of dimension $d$ greater than $0$, let $x_\ell$ denote any vertex shared by $X_{\ell}$ and $X_{\ell+1}$, and apply the $(d-1)$-dimensional construction to construct a facet path in $\Lk(x_{\ell},C)$ from $\Lk(x_{\ell},X_{\ell})$ to $\Lk(x_{\ell},X_{\ell+1})$. 

Lift this facet path to a facet path $\Gamma_{X_{\ell}X_{\ell+1}}$ in $C$ by forming the join of the path with $x_{\ell}$. This finishes the construction: The \Defn{combinatorial segment} from $X$ to $Y$ is defined as the concatenation \[\Gamma:=\prod_{i\in (0,\, \cdots,\ell)} \Gamma_{X_{i}X_{i+1}}.\]

\subsection*{The combinatorial segment is non-revisiting}\label{ssc:cnrv}

We start off with some simple observations and notions for combinatorial segments in complexes of dimension $d\ge 1$.

\begin{compactitem}[$\circ$]
\item A combinatorial segment $\Gamma$ comes with a vertex path $(x_0, x_1, \, \cdots , x_{\ell})$. This is a \Defn{shortest vertex path} in $C$, i.e.\ it realizes the distance $\ell = \did(\F_0(X),\mathcal{Y})$ resp.\ $\ell = \did(\F_0(X),\F_0(Y))$. The path $\gamma$ is the \Defn{necklace} of $\Gamma$, the vertices $x_i$, $0\le i\le\ell$, are the \Defn{pearls} of~$\Gamma$.	
\item As in the geometric proof, we denote by $\chi_i$, $0\le i\le \ell+1$, the element in the domain of $\Gamma$ corresponding to $X_i$. Let $a\neq \chi_{\ell+1}$ be any element in the domain of $\Gamma$. If $i$ is chosen so that $a\in [\chi_i,\chi_{i+1}-1]$, then $x_i$ is the \Defn{pearl associated to $a$ in $\gamma$}. By convention, we say that \Defn{$\chi_{\ell+1}$ is associated to the pearl $x_\ell$}.
 \end{compactitem}

\begin{lemma}\label{lem:S}
Assume that $\dim C\ge 1$. If $a$ is an element in the domain of $\Gamma$ associated with pearl $x_i$ such that $i<\ell$, and $v$ is any vertex of $\Gamma(a)$ such that $x_{j}, j>i,$ lies in $\St(v,C)$, then the subpath $\Gamma_{[a,\chi_{j}]}$ lies in $\St(v,C)$. In particular, in this case $X_{j}$ is a facet of $\St(v,C)$ as well.
 \end{lemma}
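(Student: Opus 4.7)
The plan is to proceed by induction on the dimension $d$ of $C$; the base $d=0$ is trivial. For $d\geq 1$, the central observation is that under the lemma's hypotheses the index $j$ must in fact equal $i+1$, after which the remaining argument is a clean case split plus one step of the outer induction.

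To see the reduction $j=i+1$, I would combine two facts. First, $v\in\varGamma(a)\ni x_i$ forces $v=x_i$ or $\{v,x_i\}\in C$, and likewise $x_j\in\St(v,C)$ gives $v=x_j$ or $\{v,x_j\}\in C$. Since $(x_0,\ldots,x_\ell)$ is a shortest vertex path, every sub-segment $(x_i,\ldots,x_j)$ is also shortest, so $\did(x_i,x_j)=j-i$; the triangle inequality through $v$ then yields $j-i\leq 2$. Second, the borderline case $j=i+2$ (with $v\neq x_i,x_{i+2}$) is ruled out as follows. Using $x_{k+1}\in \operatorname{T}_{x_k}^{\mathcal{Y}_k}$ one checks $\did(x_i,\mathcal{Y}_i)=\ell-i$ and $\did(x_{i+2},\mathcal{Y}_i)=\ell-i-2$; combined with $v$ being adjacent to both $x_i$ and $x_{i+2}$, the triangle inequality sandwiches $\did(v,\mathcal{Y}_i)=\ell-i-1$, so $v\in \operatorname{T}_{x_i}^{\mathcal{Y}_i}$. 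But by the ``intersects the target only in the last facet'' property of the Part~1 sub-segment $\varGamma'_i$ from $X'_i$ to $\operatorname{T}_{x_i}^{\mathcal{Y}_i}$ in $\Lk(x_i,C)$, such a $v$ could only appear in the terminal facet of $\varGamma'_i$, forcing $a=\chi_{i+1}$, contradicting $a\in[\chi_i,\chi_{i+1}-1]$.

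Once $j=i+1$, the argument splits. If $v=x_i$, every facet in $\varGamma_{[a,\chi_{i+1}]}\subset \varGamma_{X_iX_{i+1}}=x_i\ast\varGamma'_i$ contains $x_i$ by construction, hence lies in $\St(x_i,C)=\St(v,C)$. If $v\neq x_i$, then $v$ is a vertex of $\Lk(x_i,C)$; since $\{v,x_i\},\{x_i,x_{i+1}\},\{v,x_{i+1}\}$ are all edges of $C$ and $C$ is flag, the triple $\{v,x_i,x_{i+1}\}$ is a face of $C$, i.e.,\ $v$ and $x_{i+1}$ are joined by an edge of $\Lk(x_i,C)$. The complex $\Lk(x_i,C)$ is flag and normal of dimension $d-1$, and $\varGamma'_i$ is a combinatorial segment in it whose last pearl is $x_{i+1}$. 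Transporting $a$ to its corresponding position $a'$ in $\varGamma'_i$ (strictly before the final position, since $a<\chi_{i+1}$), the outer inductive hypothesis applied to $\varGamma'_i$ yields that the subpath of $\varGamma'_i$ from $a'$ to its last position is contained in $\St(v,\Lk(x_i,C))$; joining with $x_i$ then gives $\varGamma_{[a,\chi_{i+1}]}\subset \St(v,C)$.

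The main obstacle is the reduction step $j=i+1$: it is the only point where the specific form of the combinatorial segment (rather than a generic shortest-path sequence of facets) is essential, and it depends on invoking the ``only last facet'' property of Part~1 sub-segments as a structural property of the construction. Once that technicality is dispatched, the remainder is a routine combination of flagness with the outer inductive hypothesis in the link.
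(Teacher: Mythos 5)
Your proof follows the same overall route as the paper's: reduce to $j=i+1$, handle $v=x_i$ directly, and for $v\neq x_i$ pass to the link $\Lk(x_i,C)$ (using flagness to see that $\{v,x_i,x_{i+1}\}$ is a face, so that the last pearl of the sub-segment lies in $\St(v',\Lk(x_i,C))$) and invoke the dimensional induction. The one place you genuinely diverge is the elimination of $j=i+2$. The paper disposes of it in one terse sentence (``then $\varGamma(a)$ contains $x_{i+1}$, which is consequently the pearl associated to $a$''), leaving the reader to supply the reason. Your argument --- compute $\did(x_i,\mathcal{Y}_i)=\ell-i$ and $\did(x_{i+2},\mathcal{Y}_i)=\ell-i-2$ from the construction, sandwich via the adjacencies of $v$ to conclude $\did(v,\mathcal{Y}_i)=\ell-i-1$, hence $v\in \operatorname{T}_{x_i}^{\mathcal{Y}_i}$, and then invoke the ``intersects the target only in the last facet'' property of Part~1 sub-segments to force $a=\chi_{i+1}$ --- is, I believe, precisely the reasoning compressed in that sentence, made explicit, and it is a useful unpacking. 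Two minor points: the base of the outer induction is really $d=1$ (the hypothesis is $\dim C\geq 1$), though your $v\neq x_i$ branch implicitly rules $d=1$ out because flagness would force the $2$-face $\{v,x_i,x_{i+1}\}$; and when you apply the inductive hypothesis in $\Lk(x_i,C)$ you should phrase it in terms of the vertex $v'=\Lk(x_i,v\ast x_i)$ of the link rather than $v$ itself, as the paper does, though this is notational rather than substantive.
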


\begin{proof} 
The lemma is clear if $v$ is in $\gamma$ (i.e.\ if $v$ coincides with $x_i$), and in particular it is clear if $\dim C=1$. To see the case $v\neq x_i$, we use induction on the dimension of $C$: Assume now $\dim C>1$. Clearly, $v$ is connected to both $x_i$ and $x_j$ by edges (since $x_i\in \Gamma(a)\subset \St(v,C)$ and $x_j\subset \St(v,C)$ by assumption. Since the necklace is a shortest path in the $1$-skeleton, we therefore in particular have $j-i\le 2$. We claim that we even have we have $j=i+1$. 

Indeed, if on the other hand $j-i=2$, then $v$, seen as an element in the combinatorial link of $x_i$ in $C$, is a vertex in $\operatorname{T}_{x_i}^{\mathcal{Y}_i}$. Therefore, either $v$ or another vertex of $\Gamma(a)$ coincides with the pearl $x_{i+1}$, which contradicts the assumption that $x_{i}\neq x_{i+1}$ is the pearl associated to $a$.

Now, consider the combinatorial segment $\Gamma':=\Lk(x_i,\Gamma_{[\chi_i,\chi_{i+1}]})$ in $\Lk(x_i,C)$. We argued already that $C$ contains the edges $|vx_i|$ and $|vx_j|$, as well as the edge $|x_ix_j|$ since we now know $x_i$ and $x_j$ to be consecutive pearls. Hence, since $C$ is flag, it contains the triangle on vertices $v$, $x_i$ and $x_j=x_{i+1}$, and it trivially follows that $\St(v,\Lk(x_i,C))$ contains the pearl $x_{i+1}$ of $\Gamma'$. Furthermore, $\Gamma'(a)$ is contained in $\St(v,\Lk(x_i,C))$ since $v\in \Gamma(a)$. Hence, the subpath $\Gamma'_{[a,\chi_{i+1}]}$ of $\Gamma'$ lies in $\St(v,\Lk(x_i,C))$ by induction assumption, so \[\Gamma_{[a,\chi_{i+1}]}=x_i\ast \Gamma'_{[a,\chi_{i+1}]}\subset x_i\ast \St(v,\Lk(x_i,C))\subset\St(v,C).\qedhere\]
\end{proof}

\begin{proof}[\textbf{Second proof of Theorem~\ref{THM:HIRSCHA}}]
Again, we use induction on the dimension; a combinatorial segment is clearly non-revisiting if $\dim C=0$. Assume now $\dim C\geq 1$, and consider a combinatorial segment $\Gamma$ that connects a facet $X$ with a facet $Y$ of $C$, as constructed above. Let $a,\ b$ be in the domain of $\Gamma$, associated with pearls $x_i$ and $x_j$, respectively. Assume that both $\Gamma(a)$ and $\Gamma(b)$ lie in the star of some vertex $v$ of $C$. Then the subpath $\Gamma_{[a,b]}$ of $\Gamma$ lies in the star $\St(v,C)$ of $v$ entirely. To see this, there are two cases to consider:
\begin{compactitem}[$\circ$]
\item {\bf If $i{=}j$:} By the inductive assumption, the combinatorial segment $\Gamma_{[\chi_i,\chi_{i+1}]}$ is non-revisiting, since it was obtained from the combinatorial segment $\Lk(x_i,\Gamma_{[\chi_i,\chi_{i+1}]})$ in the complex $\Lk(x_{i},C)$ by join with~$x_{i}$. Hence, the subpath $\Gamma_{[a,b]}$ of $\Gamma_{[\chi_{i},\chi_{i+1}]}$ is non-revisiting, and consequently lies in $\St(v,C)$.
\item {\bf If $i{<}j$:} 
Since $x_j \in \Gamma(b) \subset St(v,C)$, Lemma~\ref{lem:S} shows that $\Gamma_{[a,\chi_{j}]}$ lies in $\St(v,C)$. Furthermore, we can use the argument of the previous case to show that $\Gamma_{[\chi_{j},b]}$ lies in $\St(v,C)$, so that we obtain \[\Gamma_{[a,b]}=\Gamma_{[a,\chi_{j}]}\cdot \Gamma_{[\chi_{j},b]}\subset \St(v,C).\qedhere\]
\end{compactitem}
\end{proof}

{\small
\bibliographystyle{myamsalpha}
\itemsep=-3.9mm
\bibliography{Ref}
}
  \end{document}